\newtheorem{theorem}{Theorem}[section]
\newtheorem{lemma}{Lemma}[section]
\newtheorem{method}{Method}[section]
\newtheorem{exam}{Example}[section]
\newtheorem{rem}{Remark}[section]
\newtheorem{col}{Corollary}[section]
\newtheoremstyle{noparens}%
{}{}%
{\itshape}{}%
{\bfseries}{.}%
{ }%
{\thmname{#1}\thmnumber{ #2}\mdseries\thmnote{ #3}}
\def\tsc#1{\csdef{#1}{\textsc{\lowercase{#1}}\xspace}}
\begin{document}
\let\WriteBookmarks\relax
\def\floatpagepagefraction{1}
\def\textpagefraction{.001}
\shorttitle{}
\shortauthors{Dong-Kai Li et~al.}

\title [mode = title]{A relaxation accelerated two-sweep modulus-based matrix splitting iteration method for solving linear complementarity problems}                      



\author{Dong-Kai Li}


\address{Jiangsu Key Laboratory for NSLSCS, School of Mathematical Sciences, Nanjing Normal University, Nanjing 210023, PR China}

\author{Li Wang}
\cormark[1]

\author{Yu-Ying Liu}





\cortext[cor1]{Corresponding author.\\	
	E-mail addresses: \href{mailto: wangli1@njnu.edu.cn}{wangli1@njnu.edu.cn}, \href{mailto: wlsha@163.com}{wlsha@163.com}}


\begin{abstract}
 For a linear complementarity problem, we present a relaxaiton accelerated two-sweep matrix splitting iteration method. The convergence analysis illustrates that the proposed method converges to the exact solution of the linear complementarity problem when the system matrix is an $H_+$-matrix and the convergence conditions are given. Numerical experiments show that the proposed method is more efficient than the existing ones.
\end{abstract}

\begin{keywords}
Linear complementarity problem\sep Relaxation\sep Accelerated \sep Modulus-based matrix splitting iteration method
\end{keywords}

\maketitle

\section{Introduction}
\label{sec:1}
Consider the linear complementarity problem, which is abbreviated as LCP $(q, A)$
\begin{equation}\label{equation}
z\geq 0, r:= Az + q\geq 0, z^{T}r=0,
\end{equation}
where $z\in \mathbb{R}^{n}$ is the unknown vector to be found, $A\in \mathbb{R}^{n\times n}$ and $q\in \mathbb{R}^n$ are given real matrix and real vector, respectively.

The LCP$(q, A)$ arises from scientific computing and engineering applications has gained numerous attention in recent years. It plays a very important role in many practical applications, e.g., the American option pricing model, the economies with institutional restrictions upon prices, the optimal stopping in Markov chain, the free boundary problems, and etc. see \cite{Berman1994, Cottle1992, Murty1988} for details.

A great deal of numerical methods have been proposed to solve LCP $(q,A)$. Among them, van
Bokhoven \cite{Bokhoven1981Piecewise} introduced a modulus iteration method by rewriting the linear complementarity problem as an implicit fixed point equation, which was called “modulus algorithms”. Kappel and Watson \cite{Kappel1986} Extend it to block module algorithm. Schäfer \cite{SCHAFER2004350} did further investigate on the modulus algorithm, then, a nonstationary extrapolated modulus algorithms was proposed  by Hadjidimos and Tzoumas\cite{Hadjidimos2009}. Dong and Jiang \cite{Dong2009} investigated a modified modulus method. On the basis of modular algorithm, Zheng and Li \cite{Zheng2016A} gave the non-modulus linear method for solving the linear complementarity problem.

Specially, Bai proposed \cite{Bai2010} the modulus-based matrix splitting iteration method based on the above modulus iteration methods. He did a series of work after this, see \cite{Bai2013A, Bai2013B, Bai2017ModulusA, Zhong2017ModulusB} for detail. In addition, Zhang \cite{Zhang2011, Zhang2015} introduced two-step modulus-based matrix splitting iteration methods, Zheng and Yin \cite{Zheng2013Accelerated} presented accelerated modulus-based matrix splitting iteration methods, Zheng and Li \cite{Hua2017A} proposed a relaxation modulus-based matrix splitting iteration method by utilizing relaxation strategies. Besides, Wu and Li \cite{Wu2016} investigated two-sweep modulus-based matrix splitting iteration method, Ren and Wang \cite{Ren2019} extended it to the general two-sweep modulus-based matrix splitting iteration method.

In this paper, we presented a relaxation accelerated two-sweep modulus-based matrix splitting iteration method based on the work in \cite{Ren2019}. We give two matrix splittings in this method, then, multiple parameter diagonal matrices are introduced and relaxation strategies are used. In addition, the convergence and parameter selection strategy of the method are discussed where the system matrix is an $H_+$-matrix. Numerical results show that the proposed method is effective and has better computational efficiency than other methods.

The outline of the paper is as follows. Some notations and lemmas are listed in \autoref{sec:2}. A relaxaiton accelerated two-sweep modulus-based matrix splitting iteration method is presented in \autoref{sec:3}. The convergence analysis is investigated in \autoref{sec:4} where the system
matrix is an $H_+$-matrix, and some numerical results are given in \autoref{sec:5}, and finally we give some conclusions in \autoref{sec:6}.

\section{Preliminaries}
\label{sec:2}
Some notations, definitions and necessary lemmas are listed which will be used in the sequel.

Let two matrices $P=(p_{ij})\in \mathbb{R}^{m\times n}$ and $Q=(q_{ij})\in \mathbb{R}^{m\times n}$, we write  $P\geq Q (P > Q)$ if $p_{ij}\geq q_{ij} (p_{ij}> q_{ij})$ holds for any $i$ and $j$. For $A = (a_{ij})\in \mathbb{R}^{m \times n}$, $\vert A \vert$, $A^{T}$ and $\rho (A)$ represent the absolute value of $A$ ($\vert A \vert = (\vert a_{ij}\vert)\in \mathbb{R}^{m \times n}$), the transpose and the spectral radius of $A$, respectively. Moreover, an comparison matrix $\langle A \rangle$ is defined by
\begin{equation*}
\langle a_{ij}\rangle = \begin{cases}
\vert a_{ij}\vert, &if\quad i = j,\\
-\vert a_{ij}\vert, &if \quad i \not = j,\\
\end{cases}
\quad i,j=1, 2, \cdots ,n.
\end{equation*}

A matrix $A\in \mathbb{R}^{n\times n}$ if all of its off-diagonal entries are nonpositive, is called a $Z$-matrix, and it's a $P$-matrix if all of its principle minors are positive; we call a real matrix as an $M$-matrix if it is a $Z$-matrix with $A^{-1}\geq 0$, and it is called an $H$-matrix if its comparison matrix $\langle A\rangle$ is an $M$-matrix. In Particular, an $H$-matrix with positive diagonals is called an $H_+$-matrix \cite{Bai1999}. In addition, a sufficient condition for the matrix $A$ to be a $P$-matrix is that $A$ is an $H_+$-matrix. 

Let $A = D-L-U := D-B$, where $D$, $-L$, $-U$ are the diagonal, strictly lower-triangular and the strictly upper-triangular matrices of $A$, respectively. Let $M$ is nonsingular, then $A = M-N$ is called an $M$-splitting if $M$ is an $M$-matrix and $N\geq 0$; an $H$-splitting if $\langle M \rangle - \vert N \vert$ is an $M$-matrix. Particularly, if an $H$-splitting $\langle A \rangle = \langle M \rangle - \vert N \vert$, it is called an $H$-compatible splitting.

Then, the following lemmas which are useful in the sequel are given.
\begin{lemma}[\cite{Bai1997}]\label{lemma1}
	Let $A\in \mathbb{R}^{n\times n}$ be an $H_+$-matrix, then LCP$(q, A)$ has a unique solution for any $q\in \mathbb{R}^{n\times n}$.
\end{lemma}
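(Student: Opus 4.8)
The plan is to prove \autoref{lemma1} — that an $H_+$-matrix $A$ guarantees LCP$(q,A)$ has a unique solution for every $q$ — by reducing it to the classical fact that the linear complementarity problem has a unique solution precisely when the system matrix is a $P$-matrix (see \cite{Cottle1992, Murty1988}). Since the existence-and-uniqueness characterization by the $P$-matrix property is standard, the whole argument hinges on showing that every $H_+$-matrix is a $P$-matrix; indeed the excerpt already records (just after the definition of $H_+$-matrix) that ``a sufficient condition for the matrix $A$ to be a $P$-matrix is that $A$ is an $H_+$-matrix,'' so the cleanest route is simply to invoke that implication and then cite the $P$-matrix theorem for LCPs.

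If instead a self-contained argument for the $H_+\Rightarrow P$ step is wanted, I would proceed as follows. Write $A = D - B$ with $D = \mathrm{diag}(A) > 0$ and recall that $\langle A\rangle = D - |B|$ is an $M$-matrix. The key observation is that the $P$-matrix property is inherited: for any principal submatrix indexed by $\alpha$, the submatrix $A_{\alpha\alpha}$ is again an $H_+$-matrix (its comparison matrix is a principal submatrix of the $M$-matrix $\langle A\rangle$, hence again an $M$-matrix, and its diagonal stays positive), so it suffices to show $\det A > 0$ for every $H_+$-matrix and then apply this to all principal submatrices. To get $\det A > 0$, note that $\langle A\rangle$ being an $M$-matrix gives $\langle A\rangle^{-1} \ge 0$ and, since $D>0$, one can write $D^{-1}A = I - D^{-1}B$ with $\rho(D^{-1}|B|) < 1$ (this strict inequality is exactly what makes $D - |B|$ an $M$-matrix with positive diagonal). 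Then every eigenvalue $\lambda$ of $D^{-1}B$ satisfies $|\lambda| \le \rho(|D^{-1}B|) < 1$, so every eigenvalue of $D^{-1}A = I - D^{-1}B$ has positive real part; eigenvalues come in conjugate pairs, so $\det(D^{-1}A) = \prod(1-\lambda_i) > 0$, whence $\det A = \det D \cdot \det(D^{-1}A) > 0$.

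The main obstacle, such as it is, is not the spectral-radius estimate but making the reduction to principal submatrices airtight: one must check that the $H_+$ property really does pass to every principal submatrix, which requires the elementary but essential fact that a principal submatrix of an $M$-matrix is an $M$-matrix (equivalently, that $\rho$ of the corresponding off-diagonal block does not exceed $\rho(D^{-1}|B|)$). With that in hand, combining ``$A$ is a $P$-matrix'' with the classical theorem that LCP$(q,A)$ is uniquely solvable for all $q$ iff $A$ is a $P$-matrix \cite{Cottle1992} completes the proof. Given that the paper has already stated the $H_+\Rightarrow P$ implication, I expect the author's actual proof to be the short one: cite that implication and then cite the $P$-matrix existence-uniqueness result.
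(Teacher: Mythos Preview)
Your reasoning is correct, but note that the paper does not actually prove \autoref{lemma1} at all: it is stated in the preliminaries as a cited result from \cite{Bai1997}, with no accompanying argument. So there is no ``paper's proof'' to compare against; the authors simply quote the lemma and, a few lines earlier, record the implication $H_+\Rightarrow P$ that your short route relies on.

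Your short argument (invoke $H_+\Rightarrow P$ from the preliminaries, then the classical $P$-matrix existence--uniqueness theorem from \cite{Cottle1992,Murty1988}) is exactly the standard justification and is what the citation to \cite{Bai1997} is meant to encapsulate. Your optional self-contained verification that $H_+\Rightarrow P$ via the spectral bound $\rho(D^{-1}B)\le\rho(D^{-1}|B|)<1$ and the positivity of $\det(I-D^{-1}B)$ is also sound; the only point to be careful about---closure of the $H_+$ property under taking principal submatrices---you have already identified and handled correctly (principal submatrices of $M$-matrices are $M$-matrices). Nothing is missing.
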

\begin{lemma}[\cite{Hu1982}]\label{lemma2}
	Let $B\in \mathbb{R}^{n\times n}$ be a strictly diagonal dominant matrix. Then $\forall C\in \mathbb{R}^{n\times n}$,
		\begin{gather*}
		\Vert B^{-1}C\Vert_{\infty}\leq \max_{1\leq i\leq n}\frac{(\vert C\vert e)_i}{(\langle B\rangle e)_i},
		\end{gather*}
	holds,where $e=(1,1,\cdots ,1)^T$.
\end{lemma}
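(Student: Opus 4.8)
The plan is to argue directly from the definition of strict diagonal dominance and the operator $\infty$-norm, avoiding any appeal to comparison-matrix inverse bounds. First I would record that a strictly diagonally dominant matrix $B$ is nonsingular (Levy--Desplanques), so that $B^{-1}C$ is well defined, and that $(\langle B\rangle e)_i = \vert b_{ii}\vert - \sum_{j\ne i}\vert b_{ij}\vert > 0$ for every $i$, which makes the right-hand side meaningful.

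Next, fix an arbitrary $x\in\mathbb{R}^n$ and put $y := B^{-1}Cx$, so that $By = Cx$. Pick an index $k$ with $\vert y_k\vert = \Vert y\Vert_\infty$; if $\Vert y\Vert_\infty = 0$ the estimate is trivial, so assume $y_k\ne 0$. Reading off the $k$-th row of $By = Cx$ and isolating $b_{kk}y_k$, the triangle inequality together with $\vert y_j\vert \le \vert y_k\vert$ for all $j$ yields
\[
\vert b_{kk}\vert\,\vert y_k\vert \le \vert (Cx)_k\vert + \sum_{j\ne k}\vert b_{kj}\vert\,\vert y_j\vert \le \vert (Cx)_k\vert + \Big(\sum_{j\ne k}\vert b_{kj}\vert\Big)\vert y_k\vert,
\]
hence $(\langle B\rangle e)_k\,\Vert y\Vert_\infty \le \vert (Cx)_k\vert$.

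Then I would bound the numerator by $\vert (Cx)_k\vert \le \sum_j \vert c_{kj}\vert\,\vert x_j\vert \le (\vert C\vert e)_k\,\Vert x\Vert_\infty$, which gives
\[
\Vert y\Vert_\infty \le \frac{(\vert C\vert e)_k}{(\langle B\rangle e)_k}\,\Vert x\Vert_\infty \le \Big(\max_{1\le i\le n}\frac{(\vert C\vert e)_i}{(\langle B\rangle e)_i}\Big)\Vert x\Vert_\infty .
\]
Since $x$ is arbitrary, taking the supremum over $\Vert x\Vert_\infty = 1$ gives $\Vert B^{-1}C\Vert_\infty \le \max_{1\le i\le n}(\vert C\vert e)_i/(\langle B\rangle e)_i$, as claimed.

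The proof is essentially a computation; the only points needing care are the nonsingularity of $B$ and the positivity of the denominators $(\langle B\rangle e)_i$ (both immediate from strict diagonal dominance) and the degenerate case $y = 0$. An alternative route would first establish $\vert B^{-1}\vert \le \langle B\rangle^{-1}$, then use that $w := \vert C\vert e$ satisfies $w \le \alpha\,\langle B\rangle e$ with $\alpha := \max_i (\vert C\vert e)_i/(\langle B\rangle e)_i$ and that $\langle B\rangle^{-1}\ge 0$ to conclude $\vert B^{-1}C\vert e \le \langle B\rangle^{-1} w \le \alpha e$; but that approach relies on the $H$-matrix inverse comparison, which is heavier than the row-wise estimate above, so I would prefer the direct argument.
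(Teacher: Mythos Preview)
Your argument is correct. The row-wise estimate from $By=Cx$ at the index $k$ realizing $\Vert y\Vert_\infty$ is exactly the right mechanism, and you handle the trivial cases and the well-definedness of the quotient cleanly.

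As for comparison with the paper: there is nothing to compare against. The paper does not prove this lemma at all; it is quoted from \cite{Hu1982} as a preliminary fact and used later (in the proof of Theorem~4.1) only through the resulting inequality. So your direct proof stands on its own. The alternative route you sketch at the end, via $\vert B^{-1}\vert \le \langle B\rangle^{-1}$, is in the spirit of how the paper uses Lemma~2.3 elsewhere, but you are right that it is unnecessary here: the elementary row argument is self-contained and avoids invoking the $M$-matrix inverse comparison.
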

\begin{lemma}[\cite{Frommer1989}]\label{lemma3}
	Let A be an H-matrix, then $\vert{A^{-1}}\vert \leq \langle A\rangle^{-1}$.
\end{lemma}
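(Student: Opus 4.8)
The plan is to reduce the claimed entrywise matrix inequality to a termwise comparison of two convergent Neumann series. First I would split off the diagonal: since $\langle A\rangle$ is an $M$-matrix, its diagonal entries $\langle a_{ii}\rangle=|a_{ii}|$ are positive, so the diagonal part $D$ of $A$ is nonsingular. Writing $A=D-B$ with $B$ having zero diagonal, one has $A=D(I-D^{-1}B)$, and since $\langle A\rangle=|D|-|B|$, also $\langle A\rangle=|D|\bigl(I-|D|^{-1}|B|\bigr)$.

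Next I would show $\rho\bigl(|D|^{-1}|B|\bigr)<1$. From $\langle A\rangle=|D|\bigl(I-|D|^{-1}|B|\bigr)$ being an $M$-matrix we get that $I-|D|^{-1}|B|$ is a $Z$-matrix with $\bigl(I-|D|^{-1}|B|\bigr)^{-1}=\langle A\rangle^{-1}|D|\geq 0$, hence an $M$-matrix; writing it as $I-P$ with $P=|D|^{-1}|B|\geq 0$, the standard characterization of $M$-matrices of this form forces $\rho(P)<1$. In particular $\rho(D^{-1}B)\leq\rho(|D^{-1}B|)=\rho\bigl(|D|^{-1}|B|\bigr)<1$ as well, so $I-D^{-1}B$ is invertible and both factors admit convergent Neumann expansions.

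Then I would expand $A^{-1}=(I-D^{-1}B)^{-1}D^{-1}=\sum_{k=0}^{\infty}(D^{-1}B)^{k}D^{-1}$ and estimate termwise, using $|XY|\leq|X|\,|Y|$ entrywise and $|D^{-1}|=|D|^{-1}$:
\[
|A^{-1}| \;\leq\; \sum_{k=0}^{\infty}\bigl(|D^{-1}B|\bigr)^{k}|D|^{-1} \;=\; \sum_{k=0}^{\infty}\bigl(|D|^{-1}|B|\bigr)^{k}|D|^{-1} \;=\; \bigl(I-|D|^{-1}|B|\bigr)^{-1}|D|^{-1} \;=\; \langle A\rangle^{-1}.
\]
The nonnegativity of $\langle A\rangle^{-1}$ is what legitimizes summing the termwise bounds and yields the asserted inequality $|A^{-1}|\leq\langle A\rangle^{-1}$.

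The step I expect to need the most care is the implication ``$\langle A\rangle$ is an $M$-matrix $\Rightarrow\rho(|D|^{-1}|B|)<1$'': it rests on a characterization of $M$-matrices (a $Z$-matrix is an $M$-matrix iff it has a nonnegative inverse, equivalently iff it is of the form $sI-P$ with $P\geq 0$ and $s>\rho(P)$) together with the observation that multiplying by the positive diagonal matrix $|D|^{-1}$ preserves the $M$-matrix property. Once that spectral bound is in hand, the remainder is routine manipulation of absolute values and geometric series.
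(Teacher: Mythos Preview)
Your argument is correct and is essentially the classical proof of this inequality (due to Ostrowski): split off the diagonal, use the $M$-matrix property of $\langle A\rangle$ to get $\rho(|D|^{-1}|B|)<1$, and then compare Neumann series termwise. Note, however, that the paper does not actually prove this lemma at all---it is quoted as a known result with a citation to Frommer and Szyld (1989), so there is no ``paper's own proof'' to compare against. What you have written is the standard textbook derivation and is precisely the kind of argument one finds in the cited literature; your identification of the only delicate point (deducing $\rho(|D|^{-1}|B|)<1$ from the $M$-matrix hypothesis, via the characterization that a $Z$-matrix with nonnegative inverse written as $I-P$ with $P\ge 0$ forces $\rho(P)<1$) is accurate, and the spectral bound $\rho(D^{-1}B)\le\rho(|D^{-1}B|)$ you invoke is the standard Wielandt/Perron--Frobenius comparison.
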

\begin{lemma}[\cite{Berman1994}]\label{lemma4}
	Let A be an M-matrix, there exists a positive diagonal matrix V, such that AV is a strictly diagonal dominant matrix with positive diagonal entries.
\end{lemma}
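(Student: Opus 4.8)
The plan is to produce the required matrix $V$ explicitly by taking $V=\mathrm{diag}(x_{1},\dots,x_{n})$, where $x:=A^{-1}e$ and $e=(1,1,\dots,1)^{T}$, and then to check the two desired properties (positive diagonal of $AV$, strict diagonal dominance of $AV$) by a one-line row-sum computation.

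First I would establish that $x>0$. Since $A$ is an $M$-matrix it is nonsingular with $A^{-1}\ge 0$; no row of $A^{-1}$ can be identically zero, for otherwise $A^{-1}$, and hence $A$, would be singular. Therefore every row of the nonnegative matrix $A^{-1}$ contains a strictly positive entry, so each component of $x=A^{-1}e$ is a sum of nonnegative numbers that are not all zero, i.e. $x_{i}>0$ for all $i$, and $V$ is indeed a positive diagonal matrix. I would also record that $a_{ii}>0$ for every $i$: from the $i$-th row of $AA^{-1}=I$ one gets $a_{ii}(A^{-1})_{ii}=1-\sum_{k\ne i}a_{ik}(A^{-1})_{ki}\ge 1>0$, because $a_{ik}\le 0$ for $k\ne i$ ($A$ is a $Z$-matrix) and $(A^{-1})_{ki}\ge 0$.

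Next, since $A$ is a $Z$-matrix we have $a_{ij}\le 0$ for $i\ne j$, so the entries of $AV$ are $(AV)_{ij}=a_{ij}x_{j}$, which is positive on the diagonal (as $a_{ii}>0$ and $x_{i}>0$) and nonpositive off the diagonal (as $x_{j}>0$); in particular $|(AV)_{ij}|=-a_{ij}x_{j}$ for $i\ne j$. Then for each row $i$,
\begin{equation*}
(AV)_{ii}-\sum_{j\ne i}\bigl|(AV)_{ij}\bigr| = a_{ii}x_{i}+\sum_{j\ne i}a_{ij}x_{j} = \sum_{j=1}^{n}a_{ij}x_{j}=(Ax)_{i}=(AA^{-1}e)_{i}=1>0,
\end{equation*}
so $AV$ is strictly diagonally dominant with positive diagonal entries, which is the claim.

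The computation itself is routine; the only step that genuinely uses the hypothesis is the strict positivity of $x$, which rests on the nonsingularity of $A^{-1}$ rather than merely on $A^{-1}\ge 0$ — this is precisely where the $M$-matrix assumption (in the nonsingular sense) cannot be weakened. An equally good alternative is to invoke the standard characterisation that a $Z$-matrix $A$ is a nonsingular $M$-matrix iff there exists $x>0$ with $Ax>0$; choosing such an $x$ and setting $V=\mathrm{diag}(x)$, the same row-sum identity gives $(AV)_{ii}-\sum_{j\ne i}|(AV)_{ij}|=(Ax)_{i}>0$, and $(AV)_{ii}=a_{ii}x_i$; positivity of $a_{ii}$ then follows as above, completing the argument.
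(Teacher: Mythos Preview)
Your argument is correct. The explicit choice $V=\mathrm{diag}(A^{-1}e)$ works exactly as you describe: positivity of $x=A^{-1}e$ follows from $A^{-1}\ge 0$ together with nonsingularity, positivity of the diagonal entries $a_{ii}$ follows from the $i$-th diagonal entry of $AA^{-1}=I$, and the row-sum identity $(AV)_{ii}-\sum_{j\ne i}|(AV)_{ij}|=(Ax)_i=1$ then gives strict diagonal dominance with positive diagonal.

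As for comparison with the paper: there is nothing to compare against, since the paper states this lemma without proof and simply cites it from Berman and Plemmons. It is worth noting, however, that your alternative route at the end --- start from the characterisation ``$A$ is a nonsingular $M$-matrix iff $Ax>0$ for some $x>0$'' and set $V=\mathrm{diag}(x)$ --- is precisely the equivalence recorded as Lemma~\ref{lemma8} in the paper (also cited from the same source). In that sense your second approach dovetails neatly with the paper's toolkit and is arguably the more natural one to present here, while your first approach has the virtue of producing an explicit $V$ without appealing to a further equivalence.
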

\begin{lemma}[\cite{Berman1994}]\label{lemma5}
	Let A, B be two Z-matrices, A be an M-matrix, and $B\geq A$. Then B is an M-matrix.
\end{lemma}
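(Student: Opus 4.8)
The plan is as follows. Since $B$ is already assumed to be a $Z$-matrix, by the definition of an $M$-matrix recalled in Section~\ref{sec:2} it suffices to prove that $B$ is nonsingular with $B^{-1}\geq 0$. I would obtain this by first producing a positive vector that $A$, and hence $B$, sends to a positive vector, and then rescaling $B$ so that it becomes strictly diagonally dominant, which reduces the problem to the easiest case.

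First, apply \autoref{lemma4} to the $M$-matrix $A$: there exists a positive diagonal matrix $V$ such that $AV$ is strictly (row) diagonally dominant with positive diagonal entries. Put $u:=Ve>0$, so that $V=\operatorname{diag}(u)$. Since $A$, and therefore $AV$, is a $Z$-matrix, strict diagonal dominance together with positivity of the diagonal forces each row sum of $AV$ to be positive, i.e. $Au=AVe>0$. Because $B\geq A$ and $u>0$, it follows that $Bu\geq Au>0$.

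Next, consider the diagonally scaled matrix $C:=V^{-1}BV$. Its off-diagonal entries $c_{ij}=b_{ij}u_j/u_i$ have the same sign as $b_{ij}$, so $C$ is again a $Z$-matrix, while $Ce=V^{-1}Bu>0$. A $Z$-matrix with strictly positive row sums automatically has positive diagonal entries and is strictly row diagonally dominant; such a matrix is nonsingular with nonnegative inverse (write $C=D_C-B_C$ with $D_C>0$ the diagonal part and $B_C\geq 0$; then the iteration matrix $D_C^{-1}B_C$ satisfies $\Vert D_C^{-1}B_C\Vert_\infty<1$, so $C^{-1}=(I-D_C^{-1}B_C)^{-1}D_C^{-1}=\sum_{k\geq 0}(D_C^{-1}B_C)^k D_C^{-1}\geq 0$). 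Hence $B^{-1}=VC^{-1}V^{-1}\geq 0$, and since $B$ is a $Z$-matrix this shows $B$ is an $M$-matrix.

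I do not expect any genuine obstacle here. The only point requiring care is the elementary sign-and-row-sum bookkeeping that converts ``$A$ is an $M$-matrix'' into ``$Bu>0$ for some $u>0$'' and then into strict diagonal dominance of a diagonal rescaling of $B$; and, if preferred, the statement ``a $Z$-matrix with strictly positive row sums has a nonnegative inverse'' can be quoted as a standard characterization of $M$-matrices rather than reproved inline.
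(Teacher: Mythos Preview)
Your argument is correct. Note, however, that the paper does not actually supply a proof of this lemma: it is quoted from \cite{Berman1994} as a known preliminary result. So there is no ``paper's proof'' to compare against.

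That said, your route can be shortened considerably using another preliminary the paper already lists. Once you have produced the positive vector $u=Ve$ with $Bu\geq Au>0$, you are done: \autoref{lemma8} states that for a $Z$-matrix, the existence of a positive vector $x$ with $Bx>0$ is equivalent to $B$ being an $M$-matrix. The diagonal rescaling $C=V^{-1}BV$ and the explicit Neumann-series inversion are therefore unnecessary, though of course they constitute a self-contained proof of exactly the implication $(ii)\Rightarrow(i)$ of \autoref{lemma8} that you would otherwise be citing.
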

\begin{lemma}[\cite{Shen2006}]\label{lemma6}
	Let
		\begin{gather*}
		A=\begin{gathered}\begin{pmatrix} B & C \\ I & 0 \end{pmatrix}\end{gathered}\geq 0 \quad and\quad \rho(B+C)<1,
		\end{gather*}
	where I is the identity matrix. Then  $\rho(A)<1.$
\end{lemma}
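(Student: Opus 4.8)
The plan is to exploit the Perron--Frobenius structure of the nonnegative matrix $A$. Since $A \ge 0$, its spectral radius $\lambda := \rho(A)$ is itself an eigenvalue of $A$, and it admits a nonnegative eigenvector $(x^{T}, y^{T})^{T} \ne 0$ partitioned conformally with the $2\times 2$ block form of $A$. If $\lambda = 0$ there is nothing to prove, so I would assume $\lambda > 0$ from here on and aim to deduce $\lambda < 1$.

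Reading off the two block rows of $A\,(x^{T},y^{T})^{T} = \lambda\,(x^{T},y^{T})^{T}$ gives $Bx + Cy = \lambda x$ and $x = \lambda y$. Since $\lambda > 0$, the second relation forces $x \ne 0$ (otherwise $y = 0$ as well, contradicting that the eigenvector is nonzero), and substituting $y = \lambda^{-1}x$ into the first relation yields $(\lambda B + C)x = \lambda^{2}x$. Hence $\lambda^{2}$ is an eigenvalue of the nonnegative matrix $\lambda B + C$, so $\lambda^{2} \le \rho(\lambda B + C)$.

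Now I would argue by contradiction: suppose $\lambda \ge 1$. Because $C \ge 0$, we then have $\lambda B + C \le \lambda B + \lambda C = \lambda(B+C)$ entrywise, and monotonicity of the spectral radius on nonnegative matrices gives $\rho(\lambda B + C) \le \lambda\,\rho(B+C)$. Combining this with the previous step, $\lambda^{2} \le \lambda\,\rho(B+C)$, i.e. $\lambda \le \rho(B+C) < 1$, which contradicts $\lambda \ge 1$. Therefore $\rho(A) = \lambda < 1$.

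The facts I would invoke — that $\rho(A)$ is an attained eigenvalue with a nonnegative eigenvector, that any eigenvalue is bounded in modulus by the spectral radius, and that $0 \le M_{1} \le M_{2}$ implies $\rho(M_{1}) \le \rho(M_{2})$ — are all standard Perron--Frobenius consequences, so I do not expect a serious obstacle. The one genuine idea is the elimination of $y$ that reduces the $2n\times 2n$ eigenproblem for $A$ to the $n\times n$ eigenproblem for $\lambda B + C$; after that, the assumption $\lambda \ge 1$ is exactly what lets one replace $C$ by $\lambda C$ and bring in $\rho(B+C)$. The only point requiring care is disposing of the degenerate cases ($\lambda = 0$, or $x = 0$) before dividing by $\lambda$.
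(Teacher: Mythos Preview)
Your argument is correct. The paper does not actually prove this lemma; it is quoted as a preliminary result from \cite{Shen2006} with no proof given, so there is nothing to compare against on the paper's side.

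For completeness: every step you outline goes through. The general (possibly reducible) Perron--Frobenius theorem guarantees that $\rho(A)$ is an eigenvalue of the nonnegative matrix $A$ with a nonnegative eigenvector, the elimination $x=\lambda y$ reduces to $(\lambda B + C)x = \lambda^{2}x$ with $x\neq 0$, and the monotonicity $0\le \lambda B + C \le \lambda(B+C)$ for $\lambda\ge 1$ together with $\rho(\cdot)$-monotonicity on nonnegative matrices closes the contradiction. The degenerate cases $\lambda=0$ and $x=0$ are handled exactly as you indicate.
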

\begin{lemma}[\cite{Bai2010}]\label{lemma7}
	Let $A = M - N$ be a splitting of the matrix $A\in \mathbb{R}^{n\times n}$,  $\Omega$ be a positive diagonal matrix, and $\gamma$ be a positive constant. For the LCP(q, A), the following statements hold true:
	\begin{enumerate}
		\renewcommand{\labelenumi}{\textit{(\roman{enumi})}}
		\item{if (z, r) is a solution of LCP(q, A), then $x = \frac{1}{2}\gamma (z - \Omega^{-1}r)$ satisfies the implicit fixed-point equation
				\begin{equation}\label{equation 1}
				(M+\Omega)x = Nx + (\Omega - A)\vert x\vert - \gamma q;
				\end{equation}}
		\item{if x satisfies the implicit fixed-point equation (\ref{equation 1}), then
			\begin{equation}\label{equation 2}
			z = \frac{1}{\gamma}(\vert x\vert + x) \quad and \quad r = \frac{1}{\gamma}\Omega(\vert x\vert - x)
			\end{equation}
			is a solution of the LCP(q, A).}
	\end{enumerate}
\end{lemma}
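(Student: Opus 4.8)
The plan is to prove the two implications separately, in each case by a short direct manipulation of the defining relations; the only point that requires a little care is a coordinatewise case analysis in part (i) that identifies the vector $|x|$ in terms of $z$ and $r$, and I will flag this as the crux.

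For part (i), I would start from a solution $(z,r)$ of LCP$(q,A)$, so $z\ge 0$, $r=Az+q\ge 0$ and $z^{T}r=0$; since $z$ and $r$ are nonnegative, this last condition is equivalent to the coordinatewise complementarity $z_ir_i=0$ for every $i$. Setting $x=\tfrac12\gamma(z-\Omega^{-1}r)$ and looking at a fixed index $i$, one of $z_i$, $r_i$ vanishes, so $x_i$ equals either $\tfrac12\gamma z_i\ge 0$ or $-\tfrac12\gamma(\Omega^{-1}r)_i\le 0$; in both cases $|x_i|=\tfrac12\gamma\bigl(z_i+(\Omega^{-1}r)_i\bigr)$, because $\Omega$ is a positive diagonal matrix. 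Hence $|x|=\tfrac12\gamma(z+\Omega^{-1}r)$, and combining with the definition of $x$ gives $\gamma z=|x|+x$ and $\gamma r=\Omega(|x|-x)$. Substituting these into $\gamma r=\gamma Az+\gamma q$ yields $\Omega(|x|-x)=A(|x|+x)+\gamma q$; rearranging gives $(\Omega+A)x=(\Omega-A)|x|-\gamma q$, and replacing $A$ by $M-N$ turns this into exactly the fixed-point equation (\ref{equation 1}).

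For part (ii), I would run the same computation in reverse: from (\ref{equation 1}) and $M-N=A$ we get $(A+\Omega)x=(\Omega-A)|x|-\gamma q$, i.e. $A(|x|+x)=\Omega(|x|-x)-\gamma q$. Defining $z=\tfrac1\gamma(|x|+x)$ and $r=\tfrac1\gamma\Omega(|x|-x)$, both are nonnegative since $|x|\pm x\ge 0$ componentwise and $\Omega>0$ is diagonal, and the displayed identity becomes $\gamma Az=\gamma r-\gamma q$, that is $r=Az+q$. Finally $(|x_i|+x_i)(|x_i|-x_i)=|x_i|^2-x_i^2=0$ for each $i$, so $z_ir_i=0$ and in particular $z^{T}r=0$, whence $(z,r)$ solves LCP$(q,A)$.

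The whole argument is elementary linear algebra; the only place that is not purely mechanical is the coordinatewise step in part (i) showing $|x|=\tfrac12\gamma(z+\Omega^{-1}r)$, which genuinely uses the strong complementarity $z_ir_i=0$ (not just the inner-product condition $z^{T}r=0$) together with the positivity of the diagonal of $\Omega$. Once that identity is in hand, both directions reduce to the same chain of substitutions, read forwards or backwards.
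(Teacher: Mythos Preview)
Your argument is correct. The paper does not actually prove this lemma: it is quoted verbatim from \cite{Bai2010} and stated without proof, as is standard for such preliminary results. What you have written is essentially the standard derivation of the modulus reformulation, and it matches the original proof in Bai's paper; the coordinatewise identification $|x|=\tfrac{1}{2}\gamma(z+\Omega^{-1}r)$ via complementarity is indeed the only nontrivial step, and you have handled it correctly.
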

\begin{lemma}[\cite{Berman1994}]\label{lemma8}
	Let A be a Z-matrix, then the following statements are equivalent:
	\begin{enumerate}
		\renewcommand{\labelenumi}{\textit{(\roman{enumi})}}
		\item{A is an M-matrix;}
		\item{There exists a positive vector x, such that $Ax > 0$;}
		\item{Let $A = M-N$ be a splitting and $M^{-1}\geq 0$, $N \geq 0$, then $\rho(M^{-1}N) < 1$.}
	\end{enumerate}
\end{lemma}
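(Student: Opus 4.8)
The plan is to prove the three implications (i)$\Rightarrow$(ii)$\Rightarrow$(iii)$\Rightarrow$(i) as a single cycle. Two elementary facts will carry most of the argument. First, since $A$ is a $Z$-matrix, for every $s \geq \max_i a_{ii}$ we may write $A = sI - C$ with $C = sI - A \geq 0$, so that $A$ is an $M$-matrix precisely when $s > \rho(C)$. Second, for a nonnegative matrix $T$ and a vector $x > 0$ one has the Collatz--Wielandt bound $\rho(T) \leq \max_{1 \leq i \leq n}(Tx)_i/x_i$: the similar matrix $D^{-1}TD$ with $D = \mathrm{diag}(x)$ is nonnegative and its $i$-th row sum equals $(Tx)_i/x_i$, so $\rho(T) = \rho(D^{-1}TD) \leq \Vert D^{-1}TD\Vert_\infty = \max_i (Tx)_i/x_i$.

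For (i)$\Rightarrow$(ii) I would take $x = A^{-1}e$ with $e = (1,\dots,1)^T$, which is meaningful because an $M$-matrix is nonsingular with $A^{-1} \geq 0$; then $Ax = e > 0$ and $x \geq 0$, and if some $x_i = 0$ then $(Ax)_i = \sum_{j \neq i} a_{ij}x_j \leq 0$ (using $a_{ij} \leq 0$ for $j \neq i$ and $x_j \geq 0$), contradicting $(Ax)_i = 1$; hence $x > 0$. For (ii)$\Rightarrow$(iii), let $A = M - N$ be any splitting with $M^{-1} \geq 0$, $N \geq 0$, and let $x > 0$ satisfy $Ax > 0$. Writing $v := Ax = Mx - Nx > 0$ and multiplying by $M^{-1}$ gives $x - Tx = M^{-1}v$ with $T := M^{-1}N \geq 0$; since $M$ is nonsingular, $M^{-1}$ has no zero row, so $M^{-1}v$ is a nonnegative combination of the strictly positive entries of $v$ in each coordinate and is therefore positive, i.e.\ $Tx < x$. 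The Collatz--Wielandt bound then gives $\rho(T) \leq \max_i (Tx)_i/x_i < 1$.

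For (iii)$\Rightarrow$(i) I would instantiate the splitting at $M = sI$, $N = sI - A$ for a fixed $s > \max\{0,\ \max_i a_{ii}\}$; then $M^{-1} = s^{-1}I \geq 0$, and $N \geq 0$ because $A$ is a $Z$-matrix and $s \geq \max_i a_{ii}$, so hypothesis (iii) applies and yields $\rho(M^{-1}N) < 1$. Setting $B := M^{-1}N = I - s^{-1}A \geq 0$, the Neumann series gives $(I - B)^{-1} = \sum_{k \geq 0} B^k \geq 0$, and since $I - B = s^{-1}A$ this says $A^{-1} = s^{-1}(I - B)^{-1} \geq 0$. As $A$ is a $Z$-matrix with nonnegative inverse, it is an $M$-matrix, closing the cycle.

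I expect the two strict-positivity steps to be the only places requiring care --- ruling out zero components of $A^{-1}e$ in (i)$\Rightarrow$(ii) and of $x - Tx$ in (ii)$\Rightarrow$(iii) --- and both rest on the sign pattern of a $Z$-matrix together with the fact that a nonsingular $M^{-1}$ has no zero rows; besides this, one must only check in (iii)$\Rightarrow$(i) that $s$ can be chosen large enough for $M^{-1} \geq 0$ and $N \geq 0$ to hold simultaneously. The sole quantitative ingredient, the Collatz--Wielandt inequality, is self-contained through the diagonal scaling indicated above, so no external result beyond the definitions of $Z$- and $M$-matrices is needed.
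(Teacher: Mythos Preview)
Your argument is correct and follows the classical cycle (i)$\Rightarrow$(ii)$\Rightarrow$(iii)$\Rightarrow$(i) via Collatz--Wielandt and a Neumann-series computation; the strict-positivity checks you flag are handled properly. Note, however, that there is nothing to compare against: the paper does not supply its own proof of this lemma but merely quotes it from \cite{Berman1994}, so your write-up stands on its own as a standard self-contained verification of that cited result.
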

\begin{lemma}[\cite{Zheng2013Accelerated}]\label{lemma9}
	Let $A = M_1 - N_1 = M_2 - N_2$ be two splittings of the matrix $A\in \mathbb{R}^{n\times n}$,  $\Omega_1$  and $\Omega_2$ be two positive diagonal matrices,  $\Omega$ and $\Gamma$ be $n\times n$
	positive diagonal matrices such that $\Omega = \Omega_1 + \Omega_2$. For the LCP(q, A), the following statements hold true:
	\begin{enumerate}
		\renewcommand{\labelenumi}{\textit{(\roman{enumi})}}
		\item{if (z, r) is a solution of LCP(q, A), then $x = \frac{1}{2}(\Gamma^{-1}z - \Omega^{-1}r)$ satisfies the implicit fixed-point equation
			\begin{equation}\label{equation a}
			(M_1\Gamma+\Omega_1)x = (N_1\Gamma-\Omega_2)x + (\Omega - M_2\Gamma)\vert x\vert + N_2\Gamma\vert x\vert -  q;
			\end{equation}}
		\item{if x satisfies the implicit fixed-point equation (\ref{equation a}), then
			\begin{equation}\label{equation b}
			z = {\Gamma}(\vert x\vert + x) \quad and \quad r = \Omega(\vert x\vert - x)
			\end{equation}
			is a solution of the LCP(q, A).}
	\end{enumerate}
\end{lemma}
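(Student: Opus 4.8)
The plan is to first strip away the apparent complexity of \eqref{equation a}. Using the structural identities $M_1-N_1=M_2-N_2=A$ and $\Omega_1+\Omega_2=\Omega$, I would move the term $(N_1\Gamma-\Omega_2)x$ to the left-hand side: the left then becomes $(M_1\Gamma-N_1\Gamma+\Omega_1+\Omega_2)x=(A\Gamma+\Omega)x$, while on the right $(\Omega-M_2\Gamma)|x|+N_2\Gamma|x|=(\Omega-A\Gamma)|x|$. Hence \eqref{equation a} is equivalent to the compact fixed-point equation
\begin{equation*}
(A\Gamma+\Omega)x=(\Omega-A\Gamma)|x|-q,
\end{equation*}
i.e. all four splitting matrices drop out and this is just the natural two-parameter ($\Gamma,\Omega$) analogue of the equation in \autoref{lemma7}. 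Once this reduction is recorded, both directions are short verifications.

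For part (i), assume $(z,r)$ solves LCP$(q,A)$. Since $z\ge 0$, $r\ge 0$ and $z^{T}r=0$, we have $z_ir_i=0$ for every $i$, so for each index at least one of $(\Gamma^{-1}z)_i$, $(\Omega^{-1}r)_i$ vanishes, and both of these vectors are nonnegative. Therefore $x=\tfrac12(\Gamma^{-1}z-\Omega^{-1}r)$ satisfies $|x|=\tfrac12(\Gamma^{-1}z+\Omega^{-1}r)$ componentwise; adding and subtracting gives $|x|+x=\Gamma^{-1}z$ and $|x|-x=\Omega^{-1}r$, that is, $z=\Gamma(|x|+x)$ and $r=\Omega(|x|-x)$. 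Plugging these into $r=Az+q$ yields $\Omega(|x|-x)=A\Gamma(|x|+x)+q$, and rearranging is exactly the compact equation, hence \eqref{equation a}.

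For part (ii), assume $x$ solves \eqref{equation a}, equivalently the compact equation, and set $z=\Gamma(|x|+x)$, $r=\Omega(|x|-x)$. Since $|x|+x\ge 0$ and $|x|-x\ge 0$ componentwise and $\Gamma,\Omega$ are positive diagonal, $z\ge 0$ and $r\ge 0$; moreover $z_ir_i=\Gamma_{ii}\Omega_{ii}(|x_i|^{2}-x_i^{2})=0$ for each $i$, so $z^{T}r=0$. Finally, rewriting the compact equation as $\Omega(|x|-x)=A\Gamma(|x|+x)+q$ gives $r=Az+q$, so $(z,r)$ solves LCP$(q,A)$.

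I expect the only genuinely delicate step to be the reduction of \eqref{equation a} to the compact form — this is routine but requires careful bookkeeping of the four splitting matrices against the two diagonal parameter matrices — together with the componentwise identity $|x|=\tfrac12(\Gamma^{-1}z+\Omega^{-1}r)$, which is precisely where the complementarity condition $z^{T}r=0$ enters. Everything else is direct computation, and since each implication above is manifestly reversible, the two parts combine into the stated equivalence.
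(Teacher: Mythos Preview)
Your argument is correct. The reduction of \eqref{equation a} to the compact equation $(A\Gamma+\Omega)x=(\Omega-A\Gamma)|x|-q$ via $M_1-N_1=M_2-N_2=A$ and $\Omega_1+\Omega_2=\Omega$ is exactly the right move, and both directions of the equivalence then follow by the direct computations you wrote out; the use of complementarity $z_ir_i=0$ to obtain $|x|=\tfrac12(\Gamma^{-1}z+\Omega^{-1}r)$ is the only nontrivial step and you handled it correctly.

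Note, however, that the paper does not actually prove this lemma: it is quoted from \cite{Zheng2013Accelerated} as a preliminary result in Section~\ref{sec:2}, so there is no ``paper's own proof'' to compare against. Your proof is a clean, self-contained verification and would be a perfectly acceptable proof of the cited result; it is essentially the two-parameter generalisation of the argument behind \autoref{lemma7}.
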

\section{Relaxation accelerated two-sweep modulus-based matrix splitting iteration methods}
\label{sec:3}
Let $A = M - N$ be a splitting of $A$, the two-sweep modulus-based matrix splitting (TMMS) iteration method was given as follows:
\begin{method}[\cite{Wu2016} The TMMS method for LCP$(q, A)$]\label{method1}
    \quad
	\begin{enumerate}[\bf Step 1.]
		\item{Give two initial vectors $x^{(0)}, x^{(1)} \in \mathbb{R}^{n}$, then set $k=0$;}
		\item{For $k = 1, 2, \cdots$, compute $x^{(k+1)}\in \mathbb{R}^{n}$ by solving the linear system
			\begin{equation}\label{equation3}
			(\Omega + M)x^{(k+1)} = Nx^{(k)} + (\Omega - A)\vert x^{(k-1)}\vert - \gamma q.
			\end{equation}}
		\item{Set $z^{(k+1)}=\frac{1}{\gamma}(\vert x^{(k+1)}\vert + x^{(k+1)})$.} 
		\item{If the iteration sequence $\{z^{(k)}\}_{k=1}^{+\infty}$ is convergent, then stop. Otherwise, set $k := k + 1$ and return to Step 2.}
	\end{enumerate}	
\end{method}

Based on the \autoref{method1}, Ren and Wang \cite{Ren2019} extended it to the general two-sweep modulus-based matrix splitting iteration method. 

For $\Omega_1, \Omega_2$ being two positive diagonal matrices, let $A\Omega_1 = M_{\Omega_1} - N_{\Omega_1}$, $z = \Omega_1(\vert x\vert + x)$ and $r = \Omega_2(\vert x\vert - x)$, the following is the general two-sweep modulus-based matrix splitting (GTMMS) iteration method:

\begin{method}[\cite{Ren2019} The GTMMS method for LCP$(q, A)$]\label{method2}
	\quad
	\begin{enumerate}[\bf Step 1.]
		\item{Give two initial vectors $x^{(0)}, x^{(1)} \in \mathbb{R}^{n}$, then set $k=0$;}
		\item{For $k = 1, 2, \cdots$, compute $x^{(k+1)}\in \mathbb{R}^{n}$ by solving the linear system
			\begin{equation}\label{equation4}
			(\Omega_2 + M_{\Omega_1})x^{(k+1)} = N_{\Omega_1}x^{(k)} + (\Omega_2 - A\Omega_1)\vert x^{(k-1)}\vert - q.
			\end{equation}}
		\item{Set $z^{(k+1)}=\Omega_1(\vert x^{(k+1)}\vert + x^{(k+1)})$.} 
		\item{If the iteration sequence $\{z^{(k)}\}_{k=1}^{+\infty}$ is convergent, then stop. Otherwise, set $k := k + 1$ and return to Step 2.}
	\end{enumerate}	
\end{method}

To improve the computational efficiency, let $A\Omega_1=M_{\Omega_1}-N_{\Omega_1}=M_{\Omega_2}-N_{\Omega_2}$ be two splittings of $A\Omega_1$, by utilizing \autoref{lemma9}, we introduce a nonnegative relaxation parameter $\theta$ and a positive diagonal matrix $\Omega_3$. Then the relaxation accelerated two-sweep modulus-based matrix splitting (RATMMS) iteration method is established as follows:

\begin{method}[The RATMMS method for LCP$(q, A)$]\label{method3}
	\quad
	\begin{enumerate}[\bf Step 1.]
		\item{Give two initial vectors $x^{(0)}, x^{(1)} \in \mathbb{R}^{n}$ and a nonnegative relaxation parameters $\theta$, then set $k=0$;}
		\item{For $k = 1, 2, \cdots$, compute $x^{(k+1)}\in \mathbb{R}^{n}$ by solving the linear system
			\begin{gather}\label{equation5}
			\begin{split}
			(\Omega_3+\Omega_2 + M_{\Omega_1})x^{(k+1)}=&(\Omega_3+N_{\Omega_1})[\theta x^{(k)}+(1-\theta)x^{(k-1)}]\\
			&+(\Omega_2 - M_{\Omega_2})\vert x^{(k)} \vert + N_{\Omega_2} \vert x^{(k-1)} \vert - q.
			\end{split}
			\end{gather}}
		\item{Set $z^{(k+1)}=\Omega_1(\vert x^{(k+1)}\vert + x^{(k+1)})$.} 
		\item{If the iteration sequence $\{z^{(k)}\}_{k=1}^{+\infty}$ is convergent, then stop. Otherwise, set $k := k + 1$ and return to Step 2.}
	\end{enumerate}	
\end{method}
\begin{rem}
	For $\theta = 1$ and $\Omega_3=0$, \autoref{method3} reduces to the accelerated two-sweep modulus-based matrix splitting (ATMMS) iteration method.
\end{rem}
\begin{rem}
	For
	\begin{gather*}
	M_{\Omega_1} = \frac{1}{\alpha}(D_{A\Omega_1} - \beta L_{A\Omega_1}), N_{\Omega_1} = \frac{1}{\alpha}[(1-\alpha)D_{A\Omega_1} +(\alpha-\beta) L_{A\Omega_1}+\alpha U_{A\Omega_1}];\\ M_{\Omega_2} = D_{A\Omega_1} - U_{A\Omega_1}, N_{\Omega_2} = L_{A\Omega_1},
	\end{gather*}
	where $\alpha, \beta \in \mathbb{R}$ and $\alpha\not = 0$, \autoref{method3} gives the relaxation accelerated two-sweep modulus-based accelerated overrelaxation (RATMAOR) iteration method. Set $(\alpha, \beta)=(\alpha, \alpha), (1, 1), (1, 0)$,  the RATMAOR iteration method reduces to the relaxation accelerated two-sweep modulus-based successive overrelaxation (RATMSOR) iteration method, the relaxation accelerated two-sweep modulus-based  Gauss–Seidel (RATMGS) iteration method and the relaxation accelerated two-sweep modulus-based  Jacobi (RATMJ) iteration method, respectively.
\end{rem}
\section{Convergence analysis}
\label{sec:4}
In this section, the convergence analysis for Method \autoref{method3} is investigated where the system matrix $A$ of LCP$(q, A)$ is an $H_+$-matrix.

\begin{lemma}\label{lemma4.1}
	\textit{Let $A\in \mathbb{R}^{n\times n}$ be an $H_+$-matrix, $\Omega_1$, $\Omega_2$, $\Omega_3$ be given positive diagonal matrices, and $A\Omega_1 = M_{\Omega_1} - N_{\Omega_1}=M_{\Omega_2} - N_{\Omega_2}$ be two $H$-compatible splittings of $A\Omega_1$, $\tilde{\mathcal{L}}=(\Omega_3+\Omega_2 + \langle M_{\Omega_1}\rangle)^{-1}[(\theta + \vert 1-\theta \vert) \vert \Omega_3+N_{\Omega_1}\vert + \vert\Omega_2 - M_{\Omega_2}\vert + \vert N_{\Omega_2}\vert]$. Then the iteration sequence $\{z^{(k)}\}^{+\infty}_{k=0}$ generated by Method \autoref{method3} converges to the unique solution $z^*$ if  $\rho(\tilde{\mathcal{L}}) < 1$.}
\end{lemma}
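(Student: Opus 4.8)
The plan is to pass from the iteration scheme \eqref{equation5} to the fixed-point equation \eqref{equation a} of \autoref{lemma9} (with $M_1=M_{\Omega_1}$, $N_1=N_{\Omega_1}$, $M_2=M_{\Omega_2}$, $N_2=N_{\Omega_2}$, $\Gamma=\Omega_1$ and the roles of $\Omega_1,\Omega_2$ there played by our $\Omega_1,\Omega_2$), and then to estimate the error. First I would note that by \autoref{lemma1} the LCP$(q,A)$ has a unique solution $(z^*,r^*)$, and by \autoref{lemma9} the corresponding $x^*=\tfrac12(\Omega_1^{-1}z^*-\Omega_2^{-1}r^*)$ satisfies a fixed-point equation; adding $(\Omega_3+N_{\Omega_1})[\theta x^*+(1-\theta)x^*]=(\Omega_3+N_{\Omega_1})x^*$ to both sides shows that $x^*$ also satisfies \eqref{equation5} with $x^{(k+1)}=x^{(k)}=x^{(k-1)}=x^*$. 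Subtracting this stationary identity from \eqref{equation5} gives, writing $\varepsilon^{(k)}=x^{(k)}-x^*$,
\begin{equation*}
(\Omega_3+\Omega_2+M_{\Omega_1})\varepsilon^{(k+1)}=(\Omega_3+N_{\Omega_1})[\theta\varepsilon^{(k)}+(1-\theta)\varepsilon^{(k-1)}]+(\Omega_2-M_{\Omega_2})(\vert x^{(k)}\vert-\vert x^*\vert)+N_{\Omega_2}(\vert x^{(k-1)}\vert-\vert x^*\vert).
\end{equation*}

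Next I would take absolute values and use the triangle inequality together with $\bigl|\,|x^{(k)}|-|x^*|\,\bigr|\le|\varepsilon^{(k)}|$. The key structural point is that since $A\Omega_1=M_{\Omega_1}-N_{\Omega_1}$ is an $H$-compatible splitting and $A$ is an $H_+$-matrix, $M_{\Omega_1}$ is an $H$-matrix and hence $\Omega_3+\Omega_2+M_{\Omega_1}$ is an $H$-matrix whose comparison matrix dominates $\Omega_3+\Omega_2+\langle M_{\Omega_1}\rangle$ (using $\langle B+D\rangle\ge\langle B\rangle+D$ for $D$ a positive diagonal, and positivity of the diagonal entries). Applying \autoref{lemma3} to bound $\bigl|(\Omega_3+\Omega_2+M_{\Omega_1})^{-1}\bigr|$ by $(\Omega_3+\Omega_2+\langle M_{\Omega_1}\rangle)^{-1}$ (which is nonnegative, since that matrix is an $M$-matrix), and collecting the $|\theta\varepsilon^{(k)}+(1-\theta)\varepsilon^{(k-1)}|\le\theta|\varepsilon^{(k)}|+|1-\theta||\varepsilon^{(k-1)}|$ terms, yields
\begin{equation*}
|\varepsilon^{(k+1)}|\le \mathcal{L}_1|\varepsilon^{(k)}|+\mathcal{L}_2|\varepsilon^{(k-1)}|,
\end{equation*}
where $\mathcal{L}_1=(\Omega_3+\Omega_2+\langle M_{\Omega_1}\rangle)^{-1}[\theta|\Omega_3+N_{\Omega_1}|+|\Omega_2-M_{\Omega_2}|]$ and $\mathcal{L}_2=(\Omega_3+\Omega_2+\langle M_{\Omega_1}\rangle)^{-1}[|1-\theta|\,|\Omega_3+N_{\Omega_1}|+|N_{\Omega_2}|]$, and $\mathcal{L}_1+\mathcal{L}_2=\tilde{\mathcal{L}}$.

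Finally I would convert this two-step vector recursion into a one-step recursion for $\bigl(|\varepsilon^{(k+1)}|,|\varepsilon^{(k)}|\bigr)^T$ with iteration matrix $\mathcal{W}=\begin{pmatrix}\mathcal{L}_1&\mathcal{L}_2\\ I&0\end{pmatrix}\ge 0$, and invoke \autoref{lemma6}: since $\rho(\mathcal{L}_1+\mathcal{L}_2)=\rho(\tilde{\mathcal{L}})<1$ by hypothesis, we get $\rho(\mathcal{W})<1$, hence $|\varepsilon^{(k)}|\to 0$, i.e. $x^{(k)}\to x^*$. Continuity of $z^{(k+1)}=\Omega_1(|x^{(k+1)}|+x^{(k+1)})$ in $x^{(k+1)}$ then gives $z^{(k)}\to\Omega_1(|x^*|+x^*)=z^*$, the unique solution guaranteed by \autoref{lemma1}. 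I expect the main obstacle to be the careful comparison-matrix bookkeeping in the middle step — verifying that $\Omega_3+\Omega_2+M_{\Omega_1}$ is genuinely an $H$-matrix and that $\langle\Omega_3+\Omega_2+M_{\Omega_1}\rangle\ge \Omega_3+\Omega_2+\langle M_{\Omega_1}\rangle$ with the latter invertible and nonnegative-inverse, so that \autoref{lemma3} applies and the inequality is preserved under multiplication; everything else is the standard splitting-of-a-two-step-iteration argument.
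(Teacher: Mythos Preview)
Your proposal is correct and follows essentially the same approach as the paper: subtract the stationary identity from \eqref{equation5}, take absolute values, bound the inverse via \autoref{lemma3} (after noting $\Omega_3+\Omega_2+M_{\Omega_1}$ is an $H$-matrix), and reduce the two-step recursion to a block companion matrix handled by \autoref{lemma6}. The only cosmetic difference is ordering: the paper first defines $F,G$ with $\vert(\Omega_3+\Omega_2+M_{\Omega_1})^{-1}(\cdot)\vert$ and then bounds $F+G\le\tilde{\mathcal{L}}$ at the end, whereas you apply \autoref{lemma3} up front so that your $\mathcal{L}_1+\mathcal{L}_2$ equals $\tilde{\mathcal{L}}$ exactly.
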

\begin{proof}
	Let $(z^*, r^*)$ is the exact solution of LCP$(q, A)$, then $x^* = \frac{1}{2}(\Omega_1^{-1} z^* - \Omega_2^{-1} r^*)$, $\vert x^* \vert= \frac{1}{2}(\Omega_1^{-1} z^* + \Omega_2^{-1} r^*)$ satisfies
		\begin{gather}\label{equation6}
		\begin{split}
		(\Omega_3+\Omega_2 + M_{\Omega_1})x^* = (\Omega_3+N_{\Omega_1})[\theta x^* +(1-\theta)x^*]+(\Omega_2 - M_{\Omega_2})\vert x^* \vert + N_{\Omega_2} \vert x^* \vert - q.
		\end{split}
		\end{gather}
	Subtracting (\ref{equation6}) from (\ref{equation5}), we have
		\begin{gather*}
		\begin{split}
		&\vert x^{(k+1)}-x^* \vert \\
		=&\vert (\Omega_3+\Omega_2 + M_{\Omega_1})^{-1}(\Omega_3+N_{\Omega_1})[\theta (x^{(k)} - x^*)+(1-\theta)(x^{(k-1)} - x^*)] + (\Omega_3+\Omega_2+ M_{\Omega_1})^{-1}(\Omega_2 - M_{\Omega_2})\\
		&(\vert x^{(k)} \vert - \vert x^* \vert)+ (\Omega_3+\Omega_2 + M_{\Omega_1})^{-1}N_{\Omega_2}(\vert x^{(k-1)} \vert - \vert x^* \vert)\vert\\
		\leq &\vert (\Omega_3+\Omega_2 + M_{\Omega_1})^{-1}(\Omega_3+N_{\Omega_1})\vert(\theta \vert x^{(k)} - x^*\vert+\vert 1-\theta\vert \vert x^{(k-1)} - x^*\vert) +\vert (\Omega_3+\Omega_2 + M_{\Omega_1})^{-1}(\Omega_2 - M_{\Omega_2})\vert\\
		&\vert x^{(k)}  -  x^* \vert + \vert(\Omega_3+\Omega_2 + M_{\Omega_1})^{-1}N_{\Omega_2}\vert \vert x^{(k-1)} - x^* \vert\\
		=&[\theta \vert (\Omega_3+\Omega_2 + M_{\Omega_1})^{-1}(\Omega_3+N_{\Omega_1})\vert + \vert (\Omega_3+\Omega_2 + M_{\Omega_1})^{-1}(\Omega_2 - M_{\Omega_2})\vert]\vert x^{(k)} - x^* \vert\\
		&+[\vert 1-\theta \vert \vert (\Omega_3+\Omega_2 + M_{\Omega_1})^{-1}(\Omega_3+N_{\Omega_1})\vert +\vert (\Omega_3+\Omega_2 + M_{\Omega_1})^{-1}N_{\Omega_2}\vert]\vert x^{(k-1)} - x^*\vert.\\
		\end{split}
		\end{gather*}
	Then, let
		\begin{gather*}
		\begin{split}
		&F = \theta \vert (\Omega_3+\Omega_2 + M_{\Omega_1})^{-1}(\Omega_3+N_{\Omega_1})\vert + \vert (\Omega_3+\Omega_2 + M_{\Omega_1})^{-1}(\Omega_2 - M_{\Omega_2})\vert,\\
		&G = \vert 1-\theta \vert \vert (\Omega_3+\Omega_2 + M_{\Omega_1})^{-1}(\Omega_3+N_{\Omega_1})\vert +\vert (\Omega_3+\Omega_2 + M_{\Omega_1})^{-1}N_{\Omega_2}\vert.\\
		\end{split}
		\end{gather*}
	We obtain
		\begin{gather*}
		\Bigg\vert \begin{gathered}\begin{pmatrix} x^{(k+1)} - x^* \\ x^{(k)} - x^*\end{pmatrix}\end{gathered}\Bigg\vert
		\leq \begin{gathered}\begin{pmatrix} F & G \\ I & 0 \end{pmatrix}\end{gathered}
		\Bigg\vert \begin{gathered}\begin{pmatrix} x^{(k)} - x^* \\ x^{(k-1)} - x^*\end{pmatrix}\end{gathered}\Bigg\vert.
		\end{gather*}
	Let
		\begin{gather*}
		\mathcal{L} = \begin{gathered}\begin{pmatrix} F & G \\ I & 0 \end{pmatrix}\end{gathered},
		\end{gather*}
	evidently, the iteration sequence $\{z^{(k)}\}^{+\infty}_{k=0}$ converges to the unique solution $z^*$ if $\rho(\mathcal{L}) < 1$. Since $\mathcal{L}\geq 0$, by Lemma \ref{lemma6}, $\rho(F+G)<1$ implies $\rho(\mathcal{L})<1$. To prove the convergence of the Method \ref{method3}, it is sufficient to prove $\rho(F+G)<1$.
	
	For Method \ref{method3},
		\begin{gather*}
		\begin{split}
		F+G \leq& (\theta + \vert 1-\theta \vert) \vert (\Omega_3+\Omega_2 + M_{\Omega_1})^{-1}\vert\vert\Omega_3+N_{\Omega_1}\vert \\
		&+ \vert(\Omega_3+\Omega_2 + M_{\Omega_1})^{-1}\vert(\vert\Omega_2 - M_{\Omega_2}\vert+\vert N_{\Omega_2}\vert)\\
		\leq& \vert (\Omega_3+\Omega_2 + M_{\Omega_1})^{-1}\vert[(\theta + \vert 1-\theta \vert) \vert \Omega_3+N_{\Omega_1}\vert + \vert\Omega_2 - M_{\Omega_2}\vert+\vert N_{\Omega_2}\vert].
		\end{split}
		\end{gather*}
	
	Under the conditions that $A$ is an $H_+$-matrix and $A\Omega_1 = M_{\Omega_1} - N_{\Omega_1}$ is an $H$-compatible splitting, i.e. $\langle M_{\Omega_1}\rangle - \vert N_{\Omega_1}\vert$ is an $M$-matrix, then by \autoref{lemma5}, $\langle M_{\Omega_1}\rangle\geq \langle M_{\Omega_1}\rangle-\vert N_{\Omega_1}\vert $ implies that $M_{\Omega_1}$ is an $H$-matrix, and $\Omega_3+\Omega_2 + M_{\Omega_1}$ is also an $H$-matrix.
	
	By \autoref{lemma3}, we obtain
		\begin{gather*}
		0\leq \vert (\Omega_3+\Omega_2 + M_{\Omega_1})^{-1}\vert \leq (\Omega_3+\Omega_2 + \langle M_{\Omega_1}\rangle)^{-1},
		\end{gather*}
	hence,	
		\begin{align*}
		F+G &\leq (\Omega_3+\Omega_2 + \langle M_{\Omega_1}\rangle)^{-1}[(\theta + \vert 1-\theta \vert) \vert \Omega_3+N_{\Omega_1}\vert + \vert\Omega_2 - M_{\Omega_2}\vert+\vert N_{\Omega_2}\vert]:=\tilde{\mathcal{L}}.
		\end{align*}
	
	Hence, the iteration sequence $\{z^{(k)}\}^{+\infty}_{k=0}$ generated by Method \ref{method3} converges to the unique solution $z^*$ if  $\rho(\tilde{\mathcal{L}}) < 1$. The proof is completed.
\end{proof}

\begin{theorem}\label{theorem1}
	Under the same assumptions and notations as in \autoref{lemma4.1}, for arbitrary two initial vectors, the iteration sequence $\{z^{(k)}\}^{+\infty}_{k=0}$ generated by \autoref{method3} converges to the unique solution $z^*$ of the LCP(q, A) for one of the following conditions:
	\begin{enumerate}
		\renewcommand{\labelenumi}{\textit{(\roman{enumi})}}	
		\item{$0<\theta\leq 1$, $\Omega_1$ and $\Omega_2$ satisfy
				\begin{gather*}
				\Omega_2 e \geq D_{M_{\Omega_2}}e \mbox{ and } \frac{1}{2}(\vert M_{\Omega_2} \vert + \vert N_{\Omega_2}\vert-\langle A\Omega_1\rangle)Ve < \Omega_2 Ve < D_{M_{\Omega_2}}Ve;
				\end{gather*}}
		\item{$\theta >1$, and 
				\begin{gather*}
				\begin{cases}
				\omega < \delta^{(1)}, &\Omega_2 e \geq D_{A\Omega_1}e,\\
				\omega < \delta^{(2)}, & \frac{1}{2}(\vert M_{\Omega_2} \vert + \vert N_{\Omega_2}\vert-\langle A\Omega_1\rangle)Ve < \Omega_2 Ve < D_{M_{\Omega_2}}Ve, 
				\end{cases}
				\end{gather*}}
	\end{enumerate}
	where V is an arbitrary positive diagonal matrix such that $\langle A\Omega_1\rangle V$ is strictly diagonal dominant matrix., $i = 1, 2, \cdots, n$.
	
	For the above,
		\begin{gather*}
		\delta^{(1)} = \min_{1\leq i \leq n} \frac{(\langle A\Omega_1 \rangle Ve)_i}{[(\Omega_3 + \vert N_{\Omega_1} \vert)Ve]_i}+1, 
		\end{gather*}
		\begin{gather*}
		\delta^{(2)} = \min_{1\leq i \leq n} \frac{[(2\Omega_2+\langle A\Omega_1 \rangle - \vert M_{\Omega_2} \vert - \vert N_{\Omega_2}\vert)Ve]_i}{[2(\Omega_3 + \vert N_{\Omega_1} \vert)Ve]_i}+1.
		\end{gather*}
\end{theorem}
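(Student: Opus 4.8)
The plan is to invoke \autoref{lemma4.1}: it is enough to prove that, under each of (i) and (ii), the matrix $\tilde{\mathcal{L}}$ has spectral radius less than $1$. Write $\tilde{\mathcal{L}}=\hat{B}^{-1}\hat{C}$ with $\hat{B}=\Omega_3+\Omega_2+\langle M_{\Omega_1}\rangle$ and $\hat{C}=(\theta+|1-\theta|)\,|\Omega_3+N_{\Omega_1}|+|\Omega_2-M_{\Omega_2}|+|N_{\Omega_2}|\ge0$. As in the proof of \autoref{lemma4.1}, $H$-compatibility gives $\langle M_{\Omega_1}\rangle=\langle A\Omega_1\rangle+|N_{\Omega_1}|\ge\langle A\Omega_1\rangle$, so by \autoref{lemma5} both $\langle M_{\Omega_1}\rangle$ and $\hat{B}\ge\langle M_{\Omega_1}\rangle$ are $M$-matrices; moreover $\hat{B}-\hat{C}$ is a $Z$-matrix. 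By \autoref{lemma8} it therefore suffices to produce a single positive vector $v$ with $(\hat{B}-\hat{C})v>0$: then $\hat{B}-\hat{C}$ is an $M$-matrix, and applying \autoref{lemma8}(iii) with $M=\hat{B}$ and $N=\hat{C}$ (note $\hat{B}^{-1}\ge0$ and $\hat{C}\ge0$) gives $\rho(\tilde{\mathcal{L}})=\rho(\hat{B}^{-1}\hat{C})<1$.

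I would take $v=Ve$. Since $A$ is an $H_+$-matrix, $\langle A\rangle$ is an $M$-matrix, hence so is $\langle A\Omega_1\rangle=\langle A\rangle\Omega_1$, and \autoref{lemma4} furnishes a positive diagonal matrix $V$ with $\langle A\Omega_1\rangle V$ strictly diagonally dominant and with positive diagonal, equivalently $\langle A\Omega_1\rangle Ve>0$. To check $(\hat{B}-\hat{C})Ve>0$, i.e. $\hat{C}Ve<\hat{B}Ve$, I would bound $\hat{C}$ from above by replacing $|\Omega_3+N_{\Omega_1}|$ with $\Omega_3+|N_{\Omega_1}|$ (valid because $\Omega_3\ge0$ is diagonal), use the two $H$-compatibility identities $\langle M_{\Omega_1}\rangle=\langle A\Omega_1\rangle+|N_{\Omega_1}|$ and $\langle M_{\Omega_2}\rangle=\langle A\Omega_1\rangle+|N_{\Omega_2}|$, and split $M_{\Omega_2}$ into its diagonal part $D_{M_{\Omega_2}}$ (positive) and its off-diagonal part, so that $|\Omega_2-M_{\Omega_2}|=|\Omega_2-D_{M_{\Omega_2}}|+|M_{\Omega_2}-D_{M_{\Omega_2}}|$ and $\langle M_{\Omega_2}\rangle=D_{M_{\Omega_2}}-|M_{\Omega_2}-D_{M_{\Omega_2}}|$. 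After cancelling the common $\Omega_3 Ve$ and $|N_{\Omega_1}|Ve$ contributions, the inequality collapses to
\begin{gather*}
(\theta+|1-\theta|-1)\,(\Omega_3+|N_{\Omega_1}|)Ve<(\Omega_2+\langle A\Omega_1\rangle-|\Omega_2-M_{\Omega_2}|-|N_{\Omega_2}|)Ve.
\end{gather*}

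Evaluating the right-hand side index by index and distinguishing the sign of $(\Omega_2)_{ii}-(D_{M_{\Omega_2}})_{ii}$, it equals $2(\langle A\Omega_1\rangle Ve)_i$ where $(\Omega_2)_{ii}\ge(D_{M_{\Omega_2}})_{ii}$ and $[(2\Omega_2+\langle A\Omega_1\rangle-|M_{\Omega_2}|-|N_{\Omega_2}|)Ve]_i$ where $(\Omega_2)_{ii}<(D_{M_{\Omega_2}})_{ii}$ — the second branch using $\Omega_2 Ve<D_{M_{\Omega_2}}Ve$ to resolve $|\Omega_2-D_{M_{\Omega_2}}|$ together with the identity $\tfrac12(|M_{\Omega_2}|+|N_{\Omega_2}|-\langle A\Omega_1\rangle)=|M_{\Omega_2}-D_{M_{\Omega_2}}|+|N_{\Omega_2}|$. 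For $0<\theta\le1$ the coefficient $\theta+|1-\theta|-1$ vanishes, so the inequality holds on the first branch simply because $\langle A\Omega_1\rangle Ve>0$, and on the second branch exactly when $\tfrac12(|M_{\Omega_2}|+|N_{\Omega_2}|-\langle A\Omega_1\rangle)Ve<\Omega_2 Ve$ — these are the two parts of hypothesis (i). For $\theta>1$ the coefficient is $2(\theta-1)>0$, and dividing componentwise by the positive entries of $(\Omega_3+|N_{\Omega_1}|)Ve$, respectively $2(\Omega_3+|N_{\Omega_1}|)Ve$, and minimizing over $i$ turns the inequality into $\theta<\delta^{(1)}$, respectively $\theta<\delta^{(2)}$ — the two cases of hypothesis (ii). The main obstacle I foresee is the bookkeeping in this reduction: the absolute value $|\Omega_2-D_{M_{\Omega_2}}|$ forces the sub-case split, and since $\Omega_2 e\ge D_{M_{\Omega_2}}e$ and $\Omega_2 Ve<D_{M_{\Omega_2}}Ve$ are genuinely different statements, each branch of the hypotheses must be matched to the correct branch of the row-wise evaluation; one must also note that $\delta^{(1)},\delta^{(2)}>1$ — which holds because $\langle A\Omega_1\rangle Ve>0$ and, in the second branch, $\tfrac12(|M_{\Omega_2}|+|N_{\Omega_2}|-\langle A\Omega_1\rangle)Ve<\Omega_2 Ve$ make the numerators strictly positive — so that the constraints $\theta<\delta^{(i)}$ are consistent with $\theta>1$. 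Everything else is routine entrywise algebra of comparison matrices and $H$-compatible splittings.
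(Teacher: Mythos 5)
Your proposal is correct, and the decisive computation is the same as the paper's: after bounding $\vert\Omega_3+N_{\Omega_1}\vert\le\Omega_3+\vert N_{\Omega_1}\vert$ and using the two $H$-compatibility identities, you verify row by row that $(\hat{C}Ve)_i<(\hat{B}Ve)_i$, with exactly the paper's case split on the sign of $(\Omega_2)_{ii}-(D_{M_{\Omega_2}})_{ii}$ and on $\theta\le 1$ versus $\theta>1$. The only difference is the wrapper around that inequality: the paper invokes \autoref{lemma2} to get $\rho(\tilde{\mathcal{L}})=\rho(V^{-1}\tilde{\mathcal{L}}V)\le\Vert V^{-1}\tilde{\mathcal{L}}V\Vert_\infty\le\max_i (\hat{C}Ve)_i/(\hat{B}Ve)_i$, whereas you invoke \autoref{lemma8} to see that $\hat{B}-\hat{C}$ is an $M$-matrix with regular splitting $\hat{B}-\hat{C}$, hence $\rho(\hat{B}^{-1}\hat{C})<1$; the two are interchangeable here, and your wrapper is in fact the one the paper itself uses to prove \autoref{theorem2}.
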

\begin{proof}
	By \autoref{lemma4.1}, we only need to demonstrate $\rho(\tilde{\mathcal{L}})<1$. It's easy to see that $\langle A\Omega_1\rangle$ and $\Omega_3+\Omega_2 + \langle M_{\Omega_1}\rangle$ are $M$-matrices. By \autoref{lemma4}, therefore, it's obviously that there exists a positive diagonal matrix $V$ such that $\langle A\Omega_1\rangle V$ and $(\Omega_3+\Omega_2 + \langle M_{\Omega_1}\rangle)V$ are two strictly diagonal dominant matrices. Then, by \autoref{lemma2}, we have
		\begin{gather*}
		\begin{split}
		\rho(\tilde{\mathcal{L}}) =& \rho(V^{-1}\tilde{\mathcal{L}}V)\leq {\Vert V^{-1}\tilde{\mathcal{L}}V \Vert}_\infty\\
		=& \Vert[(\Omega_3+\Omega_2 + \langle M_{\Omega_1}\rangle)V]^{-1}\\
		&[(\theta + \vert 1-\theta \vert) \vert \Omega_3+N_{\Omega_1}\vert + \vert\Omega_2 - M_{\Omega_2}\vert+\vert N_{\Omega_2}\vert]V \Vert_\infty\\
		\leq& \max_{1\leq i\leq n}\frac{\{[(\theta + \vert 1-\theta \vert) \vert \Omega_3+N_{\Omega_1}\vert + \vert\Omega_2 - M_{\Omega_2}\vert+\vert N_{\Omega_2}\vert]Ve\}_i}{[(\Omega_3+\Omega_2 + \langle M_{\Omega_1}\rangle)Ve]_i}.\\
		\end{split}
		\end{gather*}
		
	\vspace{5pt}
	(i) In this case,
		\begin{gather*}
		\rho(\tilde{\mathcal{L}})\leq \max_{1\leq i\leq n}\frac{[(\vert \Omega_3+N_{\Omega_1}\vert +\vert\Omega_2 - M_{\Omega_2}\vert+\vert N_{\Omega_2}\vert)Ve]_i}{[(\Omega_3+\Omega_2 + \langle M_{\Omega_1}\rangle)Ve]_i}.
		\end{gather*}
	If $\Omega_2\geq D_{M_{\Omega_2}}$, we have	
		\begin{gather*}
		\begin{split}
		&(\Omega_3+\Omega_2 + \langle M_{\Omega_1}\rangle)Ve - (\vert \Omega_3+N_{\Omega_1}\vert + \vert\Omega_2 - M_{\Omega_2}\vert+\vert N_{\Omega_2}\vert)Ve\\
		=&(\Omega_3+\Omega_2 + \langle M_{\Omega_1}\rangle - \vert \Omega_3+N_{\Omega_1}\vert -  \vert\Omega_2 - D_{M_{\Omega_2}}\vert-\vert B_{M_{\Omega_2}}\vert-\vert N_{\Omega_2}\vert)Ve\\
		\geq&(\Omega_3+\Omega_2 + \langle M_{\Omega_1}\rangle -  \Omega_3 -\vert N_{\Omega_1}\vert -  \Omega_2 + \langle M_{\Omega_2}\rangle - \vert N_{\Omega_2}\vert)Ve\\
		=&2\langle A\Omega_1\rangle Ve>0
		\end{split}
		\end{gather*}
	Therefore, $\max_{1\leq i\leq n}\frac{[(\vert \Omega_3+N_{\Omega_1}\vert +\vert\Omega_2 - M_{\Omega_2}\vert+\vert N_{\Omega_2}\vert)Ve]_i}{[(\Omega_3+\Omega_2 + \langle M_{\Omega_1}\rangle)Ve]_i} < 1$, thus $\rho(\tilde{\mathcal{L}}) < 1$.
	
	\vspace{5pt}
	If $\Omega_2 < D_{M_{\Omega_2}}$, we have
		\begin{gather*}
		\begin{split}
		&(\Omega_3+\Omega_2 + \langle M_{\Omega_1}\rangle)Ve - (\vert \Omega_3+N_{\Omega_1}\vert + \vert\Omega_2 - M_{\Omega_2}\vert+\vert N_{\Omega_2}\vert)Ve\\
		=&(\Omega_3+\Omega_2 + \langle M_{\Omega_1}\rangle - \vert \Omega_3+N_{\Omega_1}\vert -  \vert\Omega_2 - D_{M_{\Omega_2}}\vert-\vert B_{M_{\Omega_2}}\vert-\vert N_{\Omega_2}\vert)Ve\\
		\geq&(2\Omega_2  + \langle M_{\Omega_1}\rangle - \vert N_{\Omega_1}\vert - \vert M_{\Omega_2}\vert -\vert N_{\Omega_2}\vert)Ve\\
		=&(2\Omega_2  + \langle A\Omega_1\rangle - \vert M_{\Omega_2}\vert -\vert N_{\Omega_2}\vert)Ve.
		\end{split}
		\end{gather*}
	There exists $\Omega_2$, such that $ D_{M_{\Omega_2}}Ve>\Omega_2 Ve> \frac{1}{2}(\vert M_{\Omega_2}\vert +\vert N_{\Omega_2}\vert - \langle A\Omega_1\rangle)Ve$.
	Therefore,
	\begin{gather*}
	\max_{1\leq i\leq n}\frac{[(\vert \Omega_3+N_{\Omega_1}\vert +\vert\Omega_2 - M_{\Omega_2}\vert+\vert N_{\Omega_2}\vert)Ve]_i}{[(\Omega_3+\Omega_2 + \langle M_{\Omega_1}\rangle)Ve]_i} < 1,
	\end{gather*} 
	thus $\rho(\tilde{\mathcal{L}}) < 1$.
	
	\vspace{5pt}
	(ii) In this case,
		\begin{gather*}
		\rho(\tilde{\mathcal{L}})\leq \max_{1\leq i\leq n}\frac{\{[(2\theta - 1)\vert \Omega_3+N_{\Omega_1}\vert + \vert\Omega_2 - M_{\Omega_2}\vert+\vert N_{\Omega_2}\vert]Ve\}_i}{[(\Omega_3+\Omega_2 + \langle M_{\Omega_1}\rangle)Ve]_i}.
		\end{gather*}
	
	If $\Omega_2\geq D_{M_{\Omega_2}}$, we have 
		\begin{gather*}
		\begin{split}
		&(\Omega_3+\Omega_2 + \langle M_{\Omega_1}\rangle)Ve - [(2\theta-1)\vert \Omega_3+N_{\Omega_1}\vert + \vert\Omega_2 - M_{\Omega_2}\vert+\vert N_{\Omega_2}\vert]Ve\\
		=&[(\Omega_3+\Omega_2 + \langle M_{\Omega_1}\rangle - (2\theta-1)\vert \Omega_3+N_{\Omega_1}\vert - \vert\Omega_2 - M_{\Omega_2}\vert-\vert N_{\Omega_2}\vert]Ve\\
		\geq&[\langle M_{\Omega_1}\rangle - 2(\theta-1)\Omega_3-(2\theta-1)\vert N_{\Omega_1}\vert+\langle M_{\Omega_2}\rangle -\vert N_{\Omega_2}\vert]Ve\\
		=&[2\langle A\Omega_1\rangle - 2(\theta-1)(\Omega_3+\vert N_{\Omega_1}\vert)]Ve.
		\end{split}
		\end{gather*}
	We can set $\theta < \min_{1\leq i \leq n}\frac{(\langle A\Omega_1 \rangle Ve)_i}{(\Omega_3+\vert N_{\Omega_1}\vert)Ve]_i} + 1$,where
		\begin{gather*}
		\frac{(\langle A\Omega_1 \rangle Ve)_i}{(\Omega_3+\vert N_{\Omega_1}\vert)Ve]_i}>0,
		\end{gather*}
	then we have
		\begin{gather*}
		[2\langle A\Omega_1\rangle - 2(\theta-1)(\Omega_3+\vert N_{\Omega_1}\vert)]Ve>0,
		\end{gather*}
	which implies that $\max_{1\leq i\leq n}\frac{\{[(2\theta - 1)\vert \Omega_3+N_{\Omega_1}\vert + \vert\Omega_2 - M_{\Omega_2}\vert+\vert N_{\Omega_2}\vert]Ve\}_i}{[(\Omega_3+\Omega_2 + \langle M_{\Omega_1}\rangle)Ve]_i} < 1$, thus $\rho(\tilde{\mathcal{L}}) < 1$.
	
	If $\Omega_2 < D_{M_{\Omega_2}}$, we have
		\begin{gather*}
		\begin{split}
		&(\Omega_3+\Omega_2 + \langle M_{\Omega_1}\rangle)Ve - [(2\theta-1)\vert \Omega_3+N_{\Omega_1}\vert + \vert\Omega_2 - M_{\Omega_2}\vert+\vert N_{\Omega_2}\vert]Ve\\
		\geq&[2\Omega_2+\langle M_{\Omega_1}\rangle - 2(\theta-1)\Omega_3-(2\theta-1)\vert N_{\Omega_1}\vert-\vert M_{\Omega_2}\vert -\vert N_{\Omega_2}\vert]Ve\\
		=&[2\Omega_2+\langle A\Omega_1\rangle - 2(\theta-1)(\Omega_3+\vert N_{\Omega_1}\vert)-\vert M_{\Omega_2}\vert -\vert N_{\Omega_2}\vert]Ve.
		\end{split}
		\end{gather*}
	We can choose that $\frac{1}{2}(\vert M_{\Omega_2} \vert + \vert N_{\Omega_2}\vert-\langle A\Omega_1 \rangle)Ve < \Omega_2 Ve < D_{M_{\Omega_2}}Ve$, and set 
	\begin{gather*}
	\theta < \min_{1\leq i \leq n}\frac{[(2\Omega_2 + \langle A\Omega_1 \rangle - \vert M_{\Omega_2}\vert + \vert N_{\Omega_2}\vert)Ve]_i}{[2(\Omega_3+\vert N_{\Omega_1}\vert)Ve]_i}+1,
	\end{gather*}
	 where 
		\begin{gather*}
		{1\leq i \leq n}\frac{[(2\Omega_2 + \langle A\Omega_1 \rangle - \vert M_{\Omega_2}\vert + \vert N_{\Omega_2}\vert)Ve]_i}{[2(\Omega_3+\vert N_{\Omega_1}\vert)Ve]_i}>0,
		\end{gather*}
	then
		\begin{gather*}
		[2\Omega_2+\langle A\Omega_1\rangle - 2(\theta-1)(\Omega_3+\vert N_{\Omega_1}\vert)-\vert M_{\Omega_2}\vert -\vert N_{\Omega_2}\vert]Ve>0,
		\end{gather*}
	which implies that $\max_{1\leq i\leq n}\frac{\{[(2\theta - 1)\vert \Omega_3+N_{\Omega_1}\vert + \vert\Omega_2 - M_{\Omega_2}\vert+\vert N_{\Omega_2}\vert]Ve\}_i}{[(\Omega_3+\Omega_2 + \langle M_{\Omega_1}\rangle)Ve]_i} < 1$, thus $\rho(\tilde{\mathcal{L}}) < 1$. This complete the proof.	
\end{proof}

\begin{theorem}\label{theorem2}
	Let $A\in \mathbb{R}^{n\times n}$  be an $H_+$-matrix, and $A\Omega_1=M_{\Omega_1}-N_{\Omega_1}=M_{\Omega_2}-N_{\Omega_2}$ be two splittings where $\Omega_1$, $\Omega_2$ and $\Omega_3$ be given positive diagonal matrices. Assume that $\rho := \rho(D_{A\Omega_1}^{-1}\vert B_{A\Omega_1}\vert)$, Then for arbitrary two initial vectors, the RATMAOR iteration method is convergent for either of the following conditions:
	\begin{enumerate}[(1)]	
		\item{$0<\theta\leq 1$ and $(i)\cup (ii)\cup (iii)\cup (iv)$, where
			\vspace{5pt}
			\begin{enumerate}[(i)]
				\item{$\Omega_2\geq \frac{1}{2}D_{A\Omega_1}, 0<\alpha<\frac{2}{1+2\rho}, 0<\beta\leq \alpha, \rho<\frac{1}{2};$}
				\vspace{5pt}
				\item{$\Omega_2\geq \frac{1}{2}D_{A\Omega_1}, 2\beta\rho<\alpha<2-2\beta\rho, 0<\alpha\leq \beta, \rho<\frac{1}{2\beta};$}
				\vspace{5pt}
				\item{$\Omega_2 \geq D_{A\Omega_1}, 0<\alpha<\frac{1}{\rho}, 0<\beta\leq \alpha, \rho<1;$}
				\vspace{5pt}
				\item{$\Omega_2 \geq D_{A\Omega_1}, \beta\rho<\alpha, 0<\alpha\leq \beta, \rho<\frac{1}{\beta}.$}
				\vspace{5pt}
		\end{enumerate}}
		
		\item{$\theta>1$ and $(i)\cup (ii)$, where
			\vspace{5pt}
			\begin{enumerate}[(i)]
				\item{$\Omega_2 \geq \frac{1}{2}D_{A\Omega_1}+(\theta-1)\Omega_3, \Omega_3\leq \frac{1}{2(\theta-1)}D_{A\Omega_1}, \frac{2(\theta-1)}{2\theta(1-\rho)-1}<\alpha<\frac{2\theta}{2\theta(1+\rho)-1}, 0<\beta\leq \alpha, \rho<\frac{1}{2\omega};$}
				\vspace{5pt}
				\item{$\Omega_2 \geq \frac{1}{2}D_{A\Omega_1}+(\theta-1)\Omega_3, \Omega_3\leq \frac{1}{2(\theta-1)}D_{A\Omega_1}, \frac{2(\theta\beta\rho+\theta-1)}{2\theta-1}<\alpha<\frac{2\theta(1-\beta\rho)}{2\theta-1}, 0<\alpha\leq \beta, \rho<\frac{1}{2\theta\beta}.$}
				\vspace{5pt}
		\end{enumerate}} 
	\end{enumerate}
\end{theorem}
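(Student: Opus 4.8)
The plan is to reduce the claim to the scalar spectral–radius estimate already established in the proof of \autoref{theorem1}, and then to substitute the AOR choices of $M_{\Omega_1},N_{\Omega_1},M_{\Omega_2},N_{\Omega_2}$ from the second remark and run the resulting inequalities through a (large but routine) case analysis in $\alpha,\beta,\theta$. First I would observe that, although the AOR splittings of $A\Omega_1$ need not be $H$-compatible, the matrix $M_{\Omega_1}=\frac{1}{\alpha}(D_{A\Omega_1}-\beta L_{A\Omega_1})$ is, for $\alpha>0$, lower triangular with positive diagonal and hence an $H$-matrix; since the proof of \autoref{lemma4.1} invokes $H$-compatibility only to deduce that $M_{\Omega_1}$ (and then $\Omega_3+\Omega_2+M_{\Omega_1}$) is an $H$-matrix, its conclusion still applies, so it suffices to show $\rho(\tilde{\mathcal{L}})<1$. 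Writing $D=D_{A\Omega_1}$, $L=L_{A\Omega_1}$, $U=U_{A\Omega_1}$, $|B|=|L|+|U|$, I would record the elementary identities
\[
\langle M_{\Omega_1}\rangle=\frac{1}{\alpha}(D-\beta|L|),\qquad |N_{\Omega_1}|=\frac{1}{\alpha}\bigl(|1-\alpha|D+|\alpha-\beta|\,|L|+\alpha|U|\bigr),\qquad |M_{\Omega_2}|=D+|U|,\qquad |N_{\Omega_2}|=|L|,\qquad \langle A\Omega_1\rangle=D-|B|,
\]
with $D_{M_{\Omega_2}}=D$, and note $\rho=\rho(D^{-1}|B|)<1$ since $\langle A\Omega_1\rangle$ is an $M$-matrix.

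Next, mimicking the proof of \autoref{theorem1}, I would fix a positive diagonal $V$ with $|B|Ve\le(\rho+\varepsilon)DVe$ for a small $\varepsilon>0$ (a Perron–type scaling; this simultaneously makes $\langle A\Omega_1\rangle V$, $(\Omega_3+\Omega_2+\langle M_{\Omega_1}\rangle)V$, and every $(cD-c'|B|)V$ with $c>c'(\rho+\varepsilon)$ strictly diagonally dominant), and apply \autoref{lemma2} to get
\[
\rho(\tilde{\mathcal{L}})=\rho(V^{-1}\tilde{\mathcal{L}}V)\le\|V^{-1}\tilde{\mathcal{L}}V\|_\infty\le\max_{1\le i\le n}\frac{\bigl\{\bigl[(\theta+|1-\theta|)|\Omega_3+N_{\Omega_1}|+|\Omega_2-M_{\Omega_2}|+|N_{\Omega_2}|\bigr]Ve\bigr\}_i}{\bigl[(\Omega_3+\Omega_2+\langle M_{\Omega_1}\rangle)Ve\bigr]_i}.
\]
So it is enough to show $RVe>0$ in each case, where $R:=\Omega_3+\Omega_2+\langle M_{\Omega_1}\rangle-(\theta+|1-\theta|)|\Omega_3+N_{\Omega_1}|-|\Omega_2-M_{\Omega_2}|-|N_{\Omega_2}|$. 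I would bound $|\Omega_3+N_{\Omega_1}|\le\Omega_3+\frac{|1-\alpha|}{\alpha}D+\frac{|\alpha-\beta|}{\alpha}|L|+|U|$ and use $|\Omega_2-M_{\Omega_2}|=|\Omega_2-D|+|U|$, then split into cases according to $\theta\le1$ vs.\ $\theta>1$ (which fixes $\theta+|1-\theta|$ to be $1$ or $2\theta-1$), $\beta\le\alpha$ vs.\ $\alpha\le\beta$ (which fixes $|\alpha-\beta|$), $\alpha\le1$ vs.\ $\alpha>1$ (which fixes $|1-\alpha|$), and $\Omega_2\ge D$ vs.\ $\Omega_2<D$ (which fixes $|\Omega_2-D|$). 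In each case the $D$- and $|L|$-contributions collapse to single multiples of $D$ and of $|B|$, so $R$ is bounded below by a matrix of the form $cD-c'|B|$ (when $\theta>1$ one also picks up a term $-2(\theta-1)\Omega_3$, which I would neutralize via $\Omega_3\le\frac{1}{2(\theta-1)}D$, and when $\Omega_2<D$ the residual $2\Omega_2-D$ is made nonnegative by $\Omega_2\ge\frac12 D+(\theta-1)\Omega_3$). Since $cD-c'|B|=cD(I-\frac{c'}{c}D^{-1}|B|)$ is an $M$-matrix exactly when $\rho<c/c'$, the inequality $RVe>0$ holds precisely under the listed conditions on $\alpha,\beta,\theta,\rho$.

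For the bookkeeping, $0<\theta\le1$ with $\beta\le\alpha$ makes the $|L|$-coefficients telescope, giving $R=2D-2|B|$ when $\alpha\le1,\Omega_2\ge D$ (so $\rho<1$ suffices), $R\ge\frac{2}{\alpha}D-2|B|$ when $\alpha>1,\Omega_2\ge D$ (so $\alpha<\frac1\rho$), $R=2\Omega_2-2|B|$ when $\alpha\le1,\Omega_2<D$, and $R\ge2\Omega_2+\frac{2-2\alpha}{\alpha}D-2|B|$ when $\alpha>1,\Omega_2<D$; inserting $\Omega_2\ge\frac12 D$ into the last two yields $D-2|B|$ and $\frac{2-\alpha}{\alpha}D-2|B|$, i.e.\ $\rho<\frac12$ and $\alpha<\frac{2}{1+2\rho}$ — this is case (1)(i), and (1)(iii) is its $\Omega_2\ge D$ counterpart. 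Taking $0<\alpha\le\beta$ instead replaces the telescoped factor $2$ on $|L|$ by $\frac{2\beta}{\alpha}$ and, after $|U|\le\frac{\beta}{\alpha}|U|$, produces cases (1)(ii),(iv). For $\theta>1$ the same computation with the factor $2\theta-1$ on $|\Omega_3+N_{\Omega_1}|$ plus the two absorptions above produces the two-sided intervals $\frac{2(\theta-1)}{2\theta(1-\rho)-1}<\alpha<\frac{2\theta}{2\theta(1+\rho)-1}$ (for $\beta\le\alpha$) and $\frac{2(\theta\beta\rho+\theta-1)}{2\theta-1}<\alpha<\frac{2\theta(1-\beta\rho)}{2\theta-1}$ (for $\alpha\le\beta$), whose nonemptiness is equivalent to $\rho<\frac1{2\theta}$, respectively $\rho<\frac1{2\theta\beta}$. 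The only genuine difficulty is organizational: keeping the four-way split consistent and, in the $\theta>1$ branch, controlling $|\Omega_3+N_{\Omega_1}|$ when $\Omega_3$ and $\frac{\alpha-1}{\alpha}D$ are not entrywise comparable while simultaneously absorbing the $-2(\theta-1)\Omega_3$ term — which is exactly why the auxiliary bounds on $\Omega_2$ and $\Omega_3$ appear in part (2).
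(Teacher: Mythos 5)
Your proposal is correct and follows essentially the same route as the paper: after reducing to $\rho(\tilde{\mathcal{L}})<1$, both arguments bound $\hat{N}=(\theta+\vert 1-\theta\vert)\vert\Omega_3+N_{\Omega_1}\vert+\vert\Omega_2-M_{\Omega_2}\vert+\vert N_{\Omega_2}\vert$ entrywise, reduce $\hat{M}-\hat{N}$ in each $(\alpha,\beta,\theta,\Omega_2)$ case to a matrix of the form $cD_{A\Omega_1}-c'\vert B_{A\Omega_1}\vert$, and read off the conditions from $\rho<c/c'$ — the paper concluding via \autoref{lemma8}\,(iii) while you conclude via the Perron-scaled $\Vert\cdot\Vert_\infty$ bound of \autoref{lemma2}, which is an equivalent characterization. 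Your explicit observation that the AOR splitting need not be $H$-compatible but that $M_{\Omega_1}$ is triangular and hence still an $H$-matrix (so the conclusion of \autoref{lemma4.1} survives) is a point the paper glosses over and is worth keeping.
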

\begin{proof}
	As is the proof of \autoref{lemma4.1}, consider $\hat{M}=\Omega_3+\Omega_2+\langle M_{\Omega_1}\rangle$, $\hat{N}=( \theta+\vert 1-\theta \vert)\vert \Omega_3+N_{\Omega_1}\vert + \vert \Omega_2 - M_{\Omega_2} \vert + \vert N_{\Omega_2}\vert$, $\hat{A}=\hat{M}-\hat{N}$, and $\tilde{\mathcal{L}}=\hat{M}^{-1}\hat{N}$. By \autoref{lemma8}, if $\hat{A}$ and $\hat{M}$ are $M$-matrices, $\hat{N}\geq 0$, then $\rho(\tilde{\mathcal{L}})=\rho(\hat{M}^{-1}\hat{N})<1$. Also note that RATMAOR iteration method has two splittings $A\Omega_1= M_{\Omega_1}-N_{\Omega_1}=M_{\Omega_2}-N_{\Omega_2}$ where
		\begin{align*}
		&M_{\Omega_1} = \frac{1}{\alpha}(D_{A\Omega_1} - \beta L_{A\Omega_1}), N_{\Omega_1} = \frac{1}{\alpha}[(1-\alpha)D_{A\Omega_1} +(\alpha-\beta) L_{A\Omega_1}+\alpha U_{A\Omega_1}];\\
		&M_{\Omega_2}=D_{A\Omega_1}-U_{A\Omega_1}, N_{\Omega_2}= L_{A\Omega_1}.
		\end{align*}
	
	(1) For $0<\theta\leq 1$, by some simple calculations we have that
		\begin{align*}
		\hat{M}=& \Omega_3+\Omega_2+\frac{1}{\alpha}(D_{A\Omega_1}-\beta \vert L_{A\Omega_1}\vert),\\
		\hat{N}=& \vert \Omega_3+N_{\Omega_1}\vert+\vert \Omega_2-M_{\Omega_2} \vert+\vert N_{\Omega_2}\vert \\
		\leq& \Omega_3+\vert N_{\Omega_1}\vert+\vert \Omega_2-M_{\Omega_2} \vert+\vert N_{\Omega_2}\vert\\
		=& \Omega_3+\vert \frac{1}{\alpha}[(1-\alpha)D_{A\Omega_1}+(\alpha-\beta)L_{A\Omega_1}+\alpha U_{A\Omega_1}]\vert \\
		&+ \vert \Omega_2-(D_{A\Omega_1}- U_{A\Omega_1})\vert + \vert L_{A\Omega_1}\vert\\
		=&\Omega_3+\frac{\vert 1-\alpha \vert}{\alpha}D_{A\Omega_1}+\frac{\alpha+\vert \alpha-\beta \vert}{\alpha}\vert L_{A\Omega_1}\vert + 2\vert U_{A\Omega_1}\vert\\
		&+ \vert \Omega_2-D_{A\Omega_1}\vert := \tilde{N},
		\end{align*}
	and
		\begin{align}\label{equation7}
		\nonumber\hat{A}=&\hat{M}-\hat{N}\geq \hat{M}-\tilde{N}\\
		\nonumber=& \Omega_2-\vert\Omega_2-D_{A\Omega_1}\vert+\frac{1-\vert 1-\alpha\vert}{\alpha}D_{A\Omega_1}\\
		&-\frac{\alpha+\beta+\vert \alpha-\beta \vert}{\alpha}\vert L_{A\Omega_1}\vert-2\vert U_{A\Omega_1}\vert.
		\end{align}
	Obviously, $\hat{M}$ is an $M$-matrix, $\hat{N}\geq 0$, hence, it suffices to prove that $\hat{A}$ is an $M$-matrix.
	
	\vspace{2pt}
	For $\Omega_2\geq \frac{1}{2}D_{A\Omega_1}$, it holds that
		\begin{gather}\label{equation**}
		\Omega_2-\vert\Omega_2-D_{A\Omega_1}\vert\geq 0,
		\end{gather}
	then, for $0<\beta\leq\alpha$, from (\ref{equation7}) we obtain that
		\begin{align}\label{equation13}
		\nonumber\hat{A}\geq& \frac{1-\vert 1-\alpha \vert}{\alpha}D_{A\Omega_1}-2\vert L_{A\Omega_1}\vert -2\vert U_{A\Omega_1}\vert\\
		\geq&\frac{1-\vert 1-\alpha \vert}{\alpha}D_{A\Omega_1}-2\vert B_{A\Omega_1} \vert:= T.
		\end{align}
	It is easy to see that $T$ is an $M$-matrix if and only if
		\begin{equation*}
		1-\vert 1-\alpha \vert>0\mbox{ and }\rho<\frac{1-\vert 1-\alpha \vert}{2\alpha},
		\end{equation*}
	which is equivalent to
		\begin{gather*}
		0<\alpha<\frac{2}{1+2\rho}\mbox{ and } \rho<\frac{1}{2}.
		\end{gather*}
	On the basis of (\ref{equation**}), if $\beta\geq\alpha>0$, from (\ref{equation7}), we obtain that
		\begin{align}\label{equation14}
		\nonumber\hat{A}\geq& \frac{1-\vert 1-\alpha \vert}{\alpha}D_{A\Omega_1}-\frac{2\beta}{\alpha}\vert L_{A\Omega_1}\vert -2\vert U_{A\Omega_1}\vert\\
		\geq& \frac{1-\vert 1-\alpha \vert}{\alpha}D_{A\Omega_1}-\frac{2\beta}{\alpha}\vert B_{A\Omega_1}\vert:= T.
		\end{align}
	Here, $T$ is an $M$-matrix if and only if
		\begin{equation*}
		1-\vert 1-\alpha \vert>0\mbox{ and } \rho<\frac{1-\vert 1-\alpha \vert}{2\beta},
		\end{equation*}
	which is equivalent to
		\begin{gather*}
		2\beta\rho<\alpha<2-2\beta\rho\mbox{ and } \rho<\frac{1}{2\beta}.
		\end{gather*}
	
	Specifically, on the basis of (\ref{equation**}), if $\Omega_2\geq D_{A\Omega_1}$, and $0<\beta\leq\alpha$, then
		\begin{equation*}
		\Omega_2-\vert\Omega_2-D_{A\Omega_1}\vert=D_{A\Omega_1}\geq 0,
		\end{equation*}
	{\setlength\abovedisplayskip{-4pt}
		\setlength\belowdisplayskip{0pt}
		\begin{align}\label{equation15}
		\nonumber\hat{A}\geq& \frac{\alpha+1-\vert 1-\alpha \vert}{\alpha}D_{A\Omega_1}-2\vert L_{A\Omega_1}\vert -2\vert U_{A\Omega_1}\vert\\
		=&\frac{\alpha+1-\vert 1-\alpha \vert}{\alpha}D_{A\Omega_1}-2\vert B_{A\Omega_1} \vert:= T.
		\end{align}}\\
	As a result, $T$ is an $M$-matrix if and only if
		\begin{equation*}
		1-\vert 1-\alpha \vert>0\mbox{ and } \rho<\frac{\alpha+1-\vert 1-\alpha \vert}{2\alpha},
		\end{equation*}
	which is equivalent to
		\begin{gather*}
		0<\alpha<\frac{1}{\rho}\mbox{ and } \rho<1.
		\end{gather*}
	If $\beta\geq\alpha>0$, from (\ref{equation7}), we obtain that
		\begin{align}\label{equation16}
		\nonumber\hat{A}\geq& \frac{\alpha+1-\vert 1-\alpha \vert}{\alpha}D_{A\Omega_1}-\frac{2\beta}{\alpha}\vert L_{A\Omega_1}\vert -2\vert U_{A\Omega_1}\vert\\
		\geq&\frac{\alpha+1-\vert 1-\alpha \vert}{\alpha}D_{A\Omega_1}-\frac{2\beta}{\alpha}\vert B_{A\Omega_1} \vert:= T.
		\end{align}
	Then, $T$ is an $M$-matrix if and only if
		\begin{equation*}
		\alpha+1-\vert 1-\alpha \vert>0\mbox{ and } \rho<\frac{\alpha+1-\vert 1-\alpha \vert}{2\beta},
		\end{equation*}
	which is equivalent to
		\begin{gather*}
		\beta\rho<\alpha\mbox{ and } \rho<\frac{1}{\beta}.
		\end{gather*}
	
	(2) For $\theta>1$, by some simple calculations we have that
		\begin{align*}
		\hat{M}=& \Omega_3+\Omega_2+\frac{1}{\alpha}(D_{A\Omega_1}-\beta \vert L_{A\Omega_1}\vert),\\
		\hat{N}=& (2\theta-1)\vert \Omega_3+N_{\Omega_1}\vert+\vert \Omega_2-M_{\Omega_2} \vert+\vert N_{\Omega_2}\vert \\
		\leq& (2\theta-1)\Omega_3+(2\theta-1)\vert N_{\Omega_1}\vert+\vert \Omega_2-M_{\Omega_2} \vert+\vert N_{\Omega_2}\vert\\
		=& (2\theta-1)\Omega_3+\vert \frac{(2\theta-1)}{\alpha}[(1-\alpha)D_{A\Omega_1}+(\alpha-\beta)L_{A\Omega_1}+\alpha U_{A\Omega_1}]\vert \\
		&+ \vert \Omega_2-(D_{A\Omega_1}- U_{A\Omega_1})\vert + \vert L_{A\Omega_1}\vert\\
		=&(2\theta-1)\Omega_3+\frac{(2\theta-1)\vert 1-\alpha \vert}{\alpha}D_{A\Omega_1}+\frac{\alpha+(2\theta-1)\vert \alpha-\beta \vert}{\alpha}\vert L_{A\Omega_1}\vert \\&
		+ 2\theta\vert U_{A\Omega_1}\vert + \vert \Omega_2-D_{A\Omega_1}\vert := \tilde{N},
		\end{align*}
	and
		\begin{align}\label{equation17}
		\nonumber\hat{A}=&\hat{M}-\hat{N}\geq \hat{M}-\tilde{N}\\
		\nonumber=& 2(1-\theta)\Omega_3+\Omega_2-\vert\Omega_2-D_{A\Omega_1}\vert+\frac{1-(2\theta-1)\vert 1-\alpha\vert}{\alpha}D_{A\Omega_1}\\
		&-\frac{\alpha+\beta+(2\theta-1)\vert \alpha-\beta \vert}{\alpha}\vert L_{A\Omega_1}\vert-2\theta\vert U_{A\Omega_1}\vert.
		\end{align}
	Similarly, to prove that $\hat{A}$ is an $M$-matrix, we give the follow discussion.
	
	\vspace{5pt}
	For $\Omega_2\geq\frac{1}{2}D_{A\Omega_1}+(\theta-1)\Omega_3$, $\Omega_3\leq\frac{1}{2(\theta-1)}D_{A\Omega_1}$ and $0<\beta\leq\alpha$, it holds that 
		\begin{gather}\label{equation***}
		2(1-\theta)\Omega_3+\Omega_2-\vert\Omega_2-D_{A\Omega_1}\vert\geq 0,
		\end{gather}
	therefore, from (\ref{equation17}) we obtain that
		\begin{align}\label{equation18}
		\nonumber\hat{A}\geq& \frac{1-(2\theta-1)\vert 1-\alpha \vert}{\alpha}D_{A\Omega_1}-\frac{2\theta\alpha-2(\theta-1)\beta}{\alpha}\vert L_{A\Omega_1}\vert -2\theta\vert U_{A\Omega_1}\vert\\
		\geq&\frac{1-(2\theta-1)\vert 1-\alpha \vert}{\alpha}D_{A\Omega_1}-2\theta\vert B_{A\Omega_1}\vert\:= T.
		\end{align}
	Then, $T$ is an $M$-matrix if and only if
		\begin{equation*}
		1-(2\theta-1)\vert 1-\alpha \vert>0\mbox{ and } \rho<\frac{1-(2\theta-1)\vert 1-\alpha \vert}{2\theta\alpha},
		\end{equation*}
	which is equivalent to
		\begin{gather*}
		\frac{2(\theta-1)}{2\theta(1-\rho)-1}<\alpha<\frac{2\theta}{2\theta(1+\rho)-1}\mbox{ and } \rho<\frac{1}{2\theta}.
		\end{gather*}
	On the basis of (\ref{equation***}), if $\beta\geq\alpha>0$, from (\ref{equation17}), we obtain that
		\begin{align}\label{equation19}
		\nonumber\hat{A}\geq& \frac{1-(2\theta-1)\vert 1-\alpha \vert}{\alpha}D_{A\Omega_1}-\frac{2\theta\beta-2(\theta-1)\alpha}{\alpha}\vert L_{A\Omega_1}\vert -2\theta\vert U_{A\Omega_1}\vert\\
		\geq&\frac{1-(2\theta-1)\vert 1-\alpha \vert}{\alpha}D_{A\Omega_1}-\frac{2\theta\beta}{\alpha}\vert B_{A\Omega_1}\vert\:= T.
		\end{align}
	Obviously, $T$ is an $M$-matrix if and only if
		\begin{equation*}
		1-(2\theta-1)\vert 1-\alpha \vert>0\mbox{ and } \rho<\frac{1-(2\theta-1)\vert 1-\alpha \vert}{2\theta\beta},
		\end{equation*}
	which is equivalent to
		\begin{align*}
		\frac{2(\theta\beta\rho+\theta-1)}{2\theta-1}<\alpha<\frac{2\theta(1-\beta\rho)}{2\theta-1}\mbox{ and } \rho<\frac{1}{2\theta\beta}.
		\end{align*}
	The proof is completed.
\end{proof}
\begin{col}
	Under the same assumptions and notations as those in \autoref{theorem2}, when $\alpha=\beta$, the RATMSOR iteration method is convergent for either of the following conditions:
	\begin{enumerate}[(1)]	
	\item{$0<\theta\leq 1$ and $(i)\cup (ii)\cup (iii)\cup (iv)$, where
		\vspace{5pt}
		\begin{enumerate}[(i)]
			\item{$\Omega_2\geq \frac{1}{2}D_{A\Omega_1}, 0<\alpha<\frac{2}{1+2\rho}, 0<\beta\leq \alpha, \rho<\frac{1}{2};$}
			\vspace{5pt}
			\item{$\Omega_2 \geq D_{A\Omega_1}, 0<\alpha<\frac{1}{\rho}, 0<\beta\leq \alpha, \rho<1;$}
	\end{enumerate}}
	
	\item{$\theta>1$ and $(i)\cup (ii)$, where
		
			\vspace{5pt}
			$\Omega_2 \geq \frac{1}{2}D_{A\Omega_1}+(\theta-1)\Omega_3, \Omega_3\leq \frac{1}{2(\theta-1)}D_{A\Omega_1}, \frac{2(\theta-1)}{2\theta(1-\rho)-1}<\alpha<\frac{2\theta}{2\theta(1+\rho)-1}, 0<\beta\leq \alpha, \rho<\frac{1}{2\omega};$}
\end{enumerate}
\end{col}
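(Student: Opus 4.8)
The plan is to obtain this corollary as an immediate specialization of \autoref{theorem2}: the RATMSOR iteration method is exactly the RATMAOR iteration method with $\beta=\alpha$, so I would simply set $\beta=\alpha$ everywhere in the statement of \autoref{theorem2} and then check that several of its branches become redundant. With $\beta=\alpha$ the two splittings of \autoref{method3} read $M_{\Omega_1}=\tfrac{1}{\alpha}(D_{A\Omega_1}-\alpha L_{A\Omega_1})$, $N_{\Omega_1}=\tfrac{1}{\alpha}[(1-\alpha)D_{A\Omega_1}+\alpha U_{A\Omega_1}]$, $M_{\Omega_2}=D_{A\Omega_1}-U_{A\Omega_1}$, $N_{\Omega_2}=L_{A\Omega_1}$, so all hypotheses and all conclusions of \autoref{theorem2} remain valid verbatim; nothing new needs to be proved about the iteration itself.

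Next I would dispose of the case analysis. The ancillary sign conditions $0<\beta\le\alpha$ and $0<\alpha\le\beta$ that distinguish branches (1)(i)/(1)(ii), (1)(iii)/(1)(iv) and (2)(i)/(2)(ii) of \autoref{theorem2} both collapse to $\alpha=\beta$ once $\beta=\alpha$, so in each pair the ``$\Omega$'' hypotheses and the sign hypothesis coincide; what remains is to compare the $\alpha$-intervals and the bounds on $\rho$. For $0<\theta\le1$: branch (1)(iv) with $\beta=\alpha$ gives $\alpha\rho<\alpha$ (i.e.\ $\rho<1$) and $\rho<\tfrac{1}{\alpha}$ (i.e.\ $\alpha<\tfrac{1}{\rho}$), which is exactly branch (1)(iii); and branch (1)(ii) with $\beta=\alpha$ gives $2\alpha\rho<\alpha<2-2\alpha\rho$, i.e.\ $\rho<\tfrac12$ together with $\alpha<\tfrac{2}{1+2\rho}$, which is exactly branch (1)(i), while the extra bound $\rho<\tfrac{1}{2\beta}=\tfrac{1}{2\alpha}$ is automatic because $\alpha<\tfrac{2}{1+2\rho}$ forces $\tfrac{1}{2\alpha}>\tfrac14+\tfrac{\rho}{2}>\rho$ whenever $\rho<\tfrac12$. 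Hence case (1)'s four branches collapse to the two stated in the corollary.

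For $\theta>1$ the argument is the same in spirit: clearing denominators in branch (2)(ii) with $\beta=\alpha$ turns $\tfrac{2(\theta\alpha\rho+\theta-1)}{2\theta-1}<\alpha<\tfrac{2\theta(1-\alpha\rho)}{2\theta-1}$ into $\tfrac{2(\theta-1)}{2\theta(1-\rho)-1}<\alpha<\tfrac{2\theta}{2\theta(1+\rho)-1}$, which is branch (2)(i), and the leftover bound $\rho<\tfrac{1}{2\theta\alpha}$ is implied by $\alpha<\tfrac{2\theta}{2\theta(1+\rho)-1}$ together with $\rho<\tfrac{1}{2\theta}$, since that upper bound on $\alpha$ yields $\tfrac{1}{2\theta\alpha}>\tfrac{1+\rho}{2\theta}-\tfrac{1}{4\theta^2}$, and $\tfrac{1+\rho}{2\theta}-\tfrac{1}{4\theta^2}>\rho\iff\rho<\tfrac{1}{2\theta}$. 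Thus the two branches of case (2) collapse to the single stated condition (the ``$(i)\cup(ii)$'' and ``$\tfrac{1}{2\omega}$'' in the displayed corollary being read as the merged condition with $\rho<\tfrac{1}{2\theta}$), and the proof is complete.

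I expect the only real work to be the bookkeeping in the last two paragraphs, namely verifying that the residual spectral-radius bounds $\rho<\tfrac{1}{2\alpha}$, $\rho<\tfrac{1}{\alpha}$ and $\rho<\tfrac{1}{2\theta\alpha}$ are genuinely redundant given the $\alpha$-intervals and the primary bounds on $\rho$; everything else is a mechanical substitution $\beta=\alpha$ into \autoref{theorem2}.
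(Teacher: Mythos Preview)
Your approach is correct and matches the paper's: the corollary is stated without proof as an immediate specialization of \autoref{theorem2} to the case $\beta=\alpha$, and your verification that the paired branches (1)(i)/(ii), (1)(iii)/(iv), (2)(i)/(ii) collapse (with the residual bounds $\rho<\tfrac{1}{2\alpha}$, $\rho<\tfrac{1}{\alpha}$, $\rho<\tfrac{1}{2\theta\alpha}$ becoming redundant) is exactly the bookkeeping the paper leaves implicit. Your algebraic checks for the redundancy of those extra $\rho$-bounds are correct.
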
	
\section{Numerical experiments}
\label{sec:5}
In this section, two numerical examples will be given to illustrate the efficiency of the relaxation accelerated two-sweep modulus-based matrix splitting iteration method in terms of iteration steps (IT), the elapsed CPU time in seconds (CPU), and norm of absolute residual vectors (RES), respectively. Here, ‘RES’   is defined as
\begin{equation*}
RES(z^{(k)}):= \Vert \min{(Az^{(k)}+q, z^{(k)})}\Vert_2,\\
\end{equation*}
where $z^{(k)}$ is the $k$th approximate solution to the LCP$(q, A)$ while the minimum is taken componentwise.

All the initial vectors in our numerical experiments are chosen as 
\begin{equation*}
x^{(0)}=x^{(1)}=(0, 0, \dots, 0)^{T}\in \mathbb{R}^{n},\\
\end{equation*}
in addition, all the computations were run in MATLAB (R2016a) on an Intel(R) Core(TM), i5 where the CPU is 3.20 GHz, the memory is 8.00 GB. Besides, all iterations are terminated either $RES(z^{(k)}) \leq 10^{-5}$ or the maximum number of iterations exceeds 500. We use ‘-’ in the table to denote that neither of the above conditions is satisfied. Then, we take $\Omega_1=kI, \Omega_2=\frac{1}{2\alpha}D_{A}$ for the relaxation accelerated two-sweep modulus-based successive overrelaxation (RATMSOR) method, the accelerated two-sweep modulus-based successive overrelaxation (ATMSOR) method, the general two-sweep modulus-based successive overrelaxation (GTMSOR) method and the general modulus-based successive overrelaxation (GMSOR) method, where ‘$\alpha$’ denotes the relaxation factor. In the subsequent numerical experiments, we choose $k=0.8$ for \autoref{example1} and $k=1$ for \autoref{example2}. Note that, when $\alpha=1$, the SOR iteration methods mentioned above reduce to the GS iteration methods.
\begin{table}[!t]
	\caption{Numerical results for RATMAOR with $m=200, \mu=2, \alpha=1 $ of \autoref{example1}.}
	\label{tab:1}       
	%
	%
	\begin{tabular}{p{1.7cm}p{0.8cm}p{1.2cm}p{1.2cm}p{1.2cm}p{1.2cm}p{1.2cm}p{1.2cm}p{1.2cm}}
		\hline\noalign{\smallskip}
		RATMAOR & $\theta$ & 0.9 &1.1 &1.3 &1.5 &1.6 &1.7 &1.8\\
		\noalign{\smallskip}\bottomrule\noalign{\smallskip}
		\multirow{3}*{$\Omega_3=0$}&IT &40	&38	&47	&71	&92	&128	&206\\
		&CPU &0.2222	&0.1694	&0.2185	&0.2832	&0.3668	&0.4725	&0.7435\\
		&RES &9.86e-06	&8.51e-06	&9.44e-06	&9.39e-06	&9.57e-06	&9.75e-06	&9.59e-06\\
		\noalign{\smallskip}\hline\noalign{\smallskip}
		\multirow{3}*{$\Omega_3=\frac{1}{4}D_A$}&IT &49	&45	&40	&34	&32	&34	&37\\
		&CPU &0.2888	&0.2378	&0.2008	&0.1810	&0.1645	&0.1736	&0.1846\\
		&RES &8.57e-06	&6.68e-06	&6.70e-06	&8.64e-06	&6.55e-06	&7.55e-06	&9.88e-06\\
		\noalign{\smallskip}\hline\noalign{\smallskip}
		\multirow{3}*{$\Omega_3=\frac{1}{2}D_A$}&IT &58	&51	&44	&36	&31	&26	&29\\
		&CPU &0.3276	&0.2531	&0.2149	&0.1878	&0.1640	&0.1420	&0.1557\\
		&RES &8.26e-06	&9.22e-06	&8.56e-06	&7.23e-06	&6.07e-06	&9.30e-06	&9.66e-06\\
		\noalign{\smallskip}\hline\noalign{\smallskip}
		\multirow{3}*{$\Omega_3=D_A$}&IT &76	&65	&54	&40	&28	&32	&35\\
		&CPU &0.3978	&0.3077	&0.2672	&0.2036	&0.1451	&0.1629	&0.1763\\
		&RES &8.48e-06	&9.58e-06	&8.36e-06	&8.53e-06	&7.69e-06	&6.14e-06	&8.15e-06\\
		\noalign{\smallskip}\hline\noalign{\smallskip}
		
	\end{tabular}
\end{table}
\begin{table}[!t]
	\caption{Numerical results for \autoref{example1} with $\alpha=1, \Omega_3=\frac{1}{2}D_A, \theta=1.7$.}
	\label{tab:2}       
	%
	%
	\begin{tabular}{p{1.1cm}p{1.7cm}p{1cm}p{1.4cm}p{1.4cm}p{1.4cm}p{1.4cm}p{1.4cm}}
		\hline\noalign{\smallskip}
		&  & $m$ & 30 & 60 & 100 &150 &200 \\
		\noalign{\smallskip}\bottomrule\noalign{\smallskip}
		\multirow{12}*{$\mu$=1.5}
		&\multirow{3}*{GMSOR}&IT &40	&42	&43	&44	&44\\
		& &CPU &0.0040	&0.0124	&0.0337	&0.1605	&0.2356\\
		& &RES &9.98e-06	&8.46e-06	&8.33e-06	&7.62e-06	&8.89e-06\\
		&\multirow{3}*{GTMSOR}&IT &45	&46	&47	&48	&49\\
		& &CPU &0.0036	&0.0140	&0.0366	&0.1756	&0.2669\\
		& &RES &7.00e-06	&9.31e-06	&9.06e-06	&9.10e-06	&8.28e-06\\
		&\multirow{3}*{ATMSOR}&IT &42	&45	&46	&48	&48\\
		& &CPU &0.0039	&0.0137	&0.0360	&0.1373	&0.2142\\
		& &RES &9.18e-06	&7.48e-06	&9.31e-06	&6.97e-06	&9.46e-06\\
		&\multirow{3}*{RATMSOR}&IT &30	&32	&33	&34	&34\\
		& &CPU &0.0035	&0.0122	&0.0318	&0.0992	&0.1821\\
		& &RES &9.20e-06	&7.29e-06	&7.59e-06	&6.95e-06	&9.42e-06\\
		\noalign{\smallskip}\hline\noalign{\smallskip}
		\multirow{12}*{$\mu$=2.5}
		&\multirow{3}*{GMSOR}&IT &32	&33	&33	&34	&34\\
		& &CPU &0.0031	&0.0106	&0.0277	&0.1212	&0.2055\\
		& &RES &7.08e-06	&7.22e-06	&9.60e-06	&7.94e-06	&9.24e-06\\
		&\multirow{3}*{GTMSOR}&IT &39	&40	&41	&41	&42	\\
		& &CPU &0.0031	&0.0125	&0.0328	&0.1554	&0.2365\\
		& &RES &8.47e-06	&8.67e-06	&7.47e-06	&9.16e-06	&8.84e-06\\
		&\multirow{3}*{ATMSOR}&IT &30	&31	&32	&33	&34\\
		& &CPU &0.0028	&0.0110	&0.0268	&0.0847	&0.1848\\
		& &RES &8.14e-06	&9.33e-06	&4.82e-06	&5.44e-06	&6.00e-06\\
		&\multirow{3}*{RATMSOR}&IT &24	&24	&24	&25	&25\\
		& &CPU &0.0027	&0.0104	&0.0260	&0.0686	&0.1738\\
		& &RES &9.43e-06	&9.61e-06	&9.84e-06	&6.32e-06	&6.45e-06\\
		\noalign{\smallskip}\hline\noalign{\smallskip}
		\multirow{12}*{$\mu$=4}
		&\multirow{3}*{GMSOR}&IT &25	&26	&27	&27	&27\\
		& &CPU &0.0023	&0.0087	&0.0238	&0.0922	&0.1876\\
		& &RES &8.60e-06	&7.62e-06	&5.95e-06	&7.37e-06	&8.56e-06\\
		&\multirow{3}*{GTMSOR}&IT &34	&36	&36	&36	&36\\
		& &CPU &0.0028	&0.0114	&0.0289	&0.1191	&0.2093\\
		& &RES &7.46e-06	&4.63e-06	&6.00e-06	&7.36e-06	&8.51e-06\\
		&\multirow{3}*{ATMSOR}&IT &22	&23	&24	&25	&25\\
		& &CPU &0.0022	&0.0085	&0.0221	&0.0617	&0.1640\\
		& &RES &8.84e-06	&8.91e-06	&7.30e-06	&5.27e-06	&7.08e-06\\
		&\multirow{3}*{RATMSOR}&IT &22	&22	&22	&22	&23\\
		& &CPU &0.0027	&0.0087	&0.0239	&0.0570	&0.1599\\
		& &RES &5.73e-06	&6.61e-06	&7.76e-06	&9.18e-06	&6.99e-06\\
		\noalign{\smallskip}\hline\noalign{\smallskip}
	\end{tabular}
\end{table}
\begin{table}[!t]
	\caption{Numerical results for \autoref{example1} with $\Omega_3=\frac{1}{2}D_A, \mu=2, \theta=1.7$.}
	\label{tab:3}       
	%
	%
	\begin{tabular}{p{1.1cm}p{1.7cm}p{1cm}p{1.4cm}p{1.4cm}p{1.4cm}p{1.4cm}p{1.4cm}}
		\hline\noalign{\smallskip}
		&  & $\alpha$ &0.6 &0.9 &1.1 &1.3 &1.5\\
		\noalign{\smallskip}\bottomrule\noalign{\smallskip}
		\multirow{9}*{$m=60$}
		&\multirow{3}*{GMSOR}&IT &51	&28	&73 &- &- \\
		& &CPU &0.0164	&0.0100	&0.0217 &- &- \\
		& &RES &7.78e-06	&7.08e-06	&9.82e-06 &- &- \\
		&\multirow{3}*{GTMSOR}&IT &73	&41	&55	&110 &-\\
		& &CPU &0.0222	&0.0129	&0.0167	&0.0315	&-\\
		& &RES &8.28e-06	&9.19e-06	&8.51e-06	&9.60e-06 &-\\
		&\multirow{3}*{RATMSOR}&IT &34	&25	&29	&40	&63\\
		& &CPU &0.0127	&0.0096	&0.0107	&0.0142	&0.0214\\
		& &RES &7.23e-06	&7.08e-06	&9.20e-06	&8.14e-06	&9.34e-06\\
		\noalign{\smallskip}\hline\noalign{\smallskip}
		\multirow{9}*{$m=150$}
		&\multirow{3}*{GMSOR}&IT &54	&30	&77 &- &-\\
		& &CPU &0.1786	&0.1020	&0.2183 &- &- \\
		& &RES &7.94e-06	&5.86e-06	&8.47e-06 &- &- \\
		&\multirow{3}*{GTMSOR}&IT &77	&44	&57	&113 &-\\
		& &CPU &0.2547	&0.1266	&0.1633	&0.2980	&-\\
		& &RES &9.12e-06	&7.59e-06	&8.62e-06	&8.99e-06 &-\\
		&\multirow{3}*{RATMSOR}&IT &36	&25	&29	&40	&63	\\
		& &CPU &0.1001	&0.0846	&0.0775	&0.0962	&0.1461\\
		& &RES &7.52e-06	&9.24e-06	&9.21e-06	&8.14e-06	&9.34e-06\\
		\noalign{\smallskip}\hline\noalign{\smallskip}
	\end{tabular}
\end{table}
\begin{table}[!t]
	\caption{Numerical results for RATMAOR with $m=200, \mu=2, \alpha=1 $ of \autoref{example2}.}
	\label{tab:4}       
	%
	%
	\begin{tabular}{p{1.7cm}p{0.8cm}p{1.2cm}p{1.2cm}p{1.2cm}p{1.2cm}p{1.2cm}p{1.2cm}p{1.2cm}}
		\hline\noalign{\smallskip}
		RATMAOR & $\theta$ &1.1 &1.3 &1.5 &1.7 &1.9 &2.1 &2.3\\
		\noalign{\smallskip}\bottomrule\noalign{\smallskip}
		\multirow{3}*{$\Omega_3=0$}&IT &64	&53	&53	&58	&66	&80	&111\\
		&CPU &0.3121	&0.2298	&0.2169	&0.2424	&0.2742	&0.3154	&0.4189\\
		&RES &8.25e-06	&7.92e-06	&8.54e-06	&9.46e-06	&9.46e-06	&9.57e-06	&9.52e-06\\
		\noalign{\smallskip}\hline\noalign{\smallskip}
		\multirow{3}*{$\Omega_3=\frac{1}{4}D_A$}&IT &43	&40	&37	&33	&30	&29	&33\\
		&CPU &0.2544	&0.2219	&0.1906	&0.1720	&0.1552	&0.1589	&0.1646\\
		&RES &7.00e-06	&7.06e-06	&6.72e-06	&9.89e-06	&8.11e-06	&1.00e-05	&7.51e-06\\
		\noalign{\smallskip}\hline\noalign{\smallskip}
		\multirow{3}*{$\Omega_3=\frac{1}{2}D_A$}&IT &48	&43	&38	&32	&27	&30	&35\\
		&CPU &0.2838	&0.2154	&0.1881	&0.1610	&0.1410	&0.1545	&0.1719\\
		&RES &7.41e-06	&8.12e-06	&7.79e-06	&9.31e-06	&9.95e-06	&9.49e-06	&8.11e-06\\
		\noalign{\smallskip}\hline\noalign{\smallskip}
		\multirow{3}*{$\Omega_3=D_A$}&IT &58	&50	&41	&31	&38	&49	&67\\
		&CPU &0.3242	&0.2522	&0.2054	&0.1683	&0.1927	&0.2255	&0.2970\\
		&RES &9.57e-06	&8.93e-06	&8.15e-06	&9.99e-06	&8.18e-06	&8.46e-06	&9.84e-06\\
		\noalign{\smallskip}\hline\noalign{\smallskip}
		
	\end{tabular}
\end{table}
\begin{table}[!t]
	\caption{Numerical results for \autoref{example2} with $\alpha=1, \Omega_3=\frac{1}{2}D_A, \theta=1.9$.}
	\label{tab:5}       
	%
	%
	\begin{tabular}{p{1.1cm}p{1.7cm}p{1cm}p{1.4cm}p{1.4cm}p{1.4cm}p{1.4cm}p{1.4cm}}
		\hline\noalign{\smallskip}
		&  & $m$ & 30 & 60 & 100 &150 &200 \\
		\noalign{\smallskip}\bottomrule\noalign{\smallskip}
		\multirow{9}*{$\mu$=1.5}
		&\multirow{3}*{GMSOR}&IT &32	&33	&34	&34	&35\\
		& &CPU &0.0027	&0.0108	&0.0302	&0.1202	&0.2136\\
		& &RES &8.97e-06	&9.04e-06	&8.07e-06	&9.98e-06	&7.87e-06\\
		&\multirow{3}*{GTMSOR}&IT &49	&50	&51	&51	&53\\
		& &CPU &0.0037	&0.0149	&0.0391	&0.1795	&0.3024\\
		& &RES &7.16e-06	&9.58e-06	&7.69e-06	&9.45e-06	&6.25e-06\\
		&\multirow{3}*{RATMSOR}&IT &30	&32	&33	&34	&34\\
		& &CPU &0.0034	&0.0125	&0.0340	&0.1101	&0.1806\\
		& &RES &9.45e-06	&6.41e-06	&6.62e-06	&6.12e-06	&8.35e-06\\
		\noalign{\smallskip}\hline\noalign{\smallskip}
		\multirow{9}*{$\mu$=2.5}
		&\multirow{3}*{GMSOR}&IT &28	&29	&29	&30	&30\\
		& &CPU &0.0028	&0.0095	&0.0273	&0.1036	&0.1975\\
		& &RES &8.10e-06	&7.58e-06	&9.94e-06	&7.79e-06	&9.03e-06\\
		&\multirow{3}*{GTMSOR}&IT &44	&46	&47	&47	&48\\
		& &CPU &0.0036	&0.0136	&0.0363	&0.1625	&0.2819\\
		& &RES &9.18e-06	&7.72e-06	&7.72e-06	&9.51e-06	&8.31e-06\\
		&\multirow{3}*{RATMSOR}&IT &25	&26	&26	&26	&26\\
		& &CPU &0.0028	&0.0105	&0.0260	&0.0688	&0.1637\\
		& &RES &8.55e-06	&5.31e-06	&6.21e-06	&7.23e-06	&8.17e-06\\
		\noalign{\smallskip}\hline\noalign{\smallskip}
		\multirow{9}*{$\mu$=4}
		&\multirow{3}*{GMSOR}&IT &24	&25	&26	&26	&26\\
		& &CPU &0.0022	&0.0089	&0.0241	&0.0991	&0.1762\\
		& &RES &9.63e-06	&8.24e-06	&6.31e-06	&7.79e-06	&9.02e-06\\
		&\multirow{3}*{GTMSOR}&IT &40	&42	&42	&42	&44\\
		& &CPU &0.0034	&0.0130	&0.0331	&0.1468	&0.2420\\
		& &RES &7.31e-06	&5.80e-06	&7.54e-06	&9.28e-06	&5.91e-06\\
		&\multirow{3}*{RATMSOR}&IT &24	&24	&24	&25	&26\\
		& &CPU &0.0026	&0.0098	&0.0258	&0.0797	&0.1736\\
		& &RES &4.23e-06	&5.66e-06	&7.60e-06	&9.09e-06	&3.94e-06\\
		\noalign{\smallskip}\hline\noalign{\smallskip}
	\end{tabular}
\end{table}
\begin{table}[!t]
	\caption{Numerical results for \autoref{example2} with $\Omega_3=\frac{1}{2}D_A, \mu=2, \theta=1.7$.}
	\label{tab:6}       
	%
	%
	\begin{tabular}{p{1.1cm}p{1.7cm}p{1cm}p{1.4cm}p{1.4cm}p{1.4cm}p{1.4cm}p{1.4cm}}
		\hline\noalign{\smallskip}
		&  & $\alpha$ &0.6 &0.9 &1.2 &1.4 &1.5\\
		\noalign{\smallskip}\bottomrule\noalign{\smallskip}
		\multirow{9}*{$m=60$}
		&\multirow{3}*{GMSOR}&IT &41	&21	&124 &- &- \\
		& &CPU &0.0123	&0.0076	&0.0348 &- &- \\
		& &RES &8.82e-06	&5.31e-06	&9.92e-06 &- &- \\
		&\multirow{3}*{GTMSOR}&IT &58	&38	&78	&184 &-\\
		& &CPU &0.0169	&0.0120	&0.0227	&0.0516	 &-\\
		& &RES &7.52e-06	&9.81e-06	&9.51e-06	&9.91e-06 &-\\
		&\multirow{3}*{RATMSOR}&IT &38	&29	&28	&39	&50\\
		& &CPU &0.0139	&0.0109	&0.0105	&0.0138	&0.0171\\
		& &RES &9.37e-06	&6.73e-06	&8.89e-06	&8.69e-06	&9.70e-06\\
		\noalign{\smallskip}\hline\noalign{\smallskip}
		\multirow{9}*{$m=150$}
		&\multirow{3}*{GMSOR}&IT &44	&22	&130 &- &-\\
		& &CPU &0.1487	&0.0847	&0.3700 &- &- \\
		& &RES &7.40e-06	&5.09e-06	&9.20e-06 &- &- \\
		&\multirow{3}*{GTMSOR}&IT &61	&40	&82	&187 &-\\
		& &CPU &0.2037	&0.1155	&0.2405	&0.4909	&-\\
		& &RES &9.10e-06	&7.80e-06	&8.37e-06	&9.71e-06 &-\\
		&\multirow{3}*{RATMSOR}&IT &39	&29	&28	&39	&50\\
		& &CPU &0.1292	&0.1019	&0.0678	&0.0883	&0.1122\\
		& &RES &7.61e-06	&7.76e-06	&9.59e-06	&8.69e-06	&9.70e-06\\
		\noalign{\smallskip}\hline\noalign{\smallskip}
	\end{tabular}
\end{table}
\begin{exam}[\cite{Bai2010}]\label{example1}
	Let $m$ be a prescribed positive integer and $n = m^2$. Consider the LCP$(q, A)$, in which $A\in \mathbb{R}^{n\times n}$ is given by $A = \hat{A}+ \mu I$ and $q\in \mathbb{R}^n$ is given by $q = -Az^*$, where
	
	$$
	\begin{gathered}
	\hat{A} = tridiag(-I, S, -I)=
	\begin{pmatrix}
	S & -I & 0 & \cdots & 0 & 0 \\
	-I & S & -I & \cdots & 0 & 0 \\
	0 & -I & S & \cdots & 0 & 0 \\
	\vdots & \vdots & \vdots & \ddots & \vdots & \vdots \\
	0 & 0 & \cdots & \cdots & S & -I \\
	0 & 0 & \cdots & \cdots & -I & S \\
	\end{pmatrix}\in \mathbb{R}^{n\times n}\\
	\end{gathered}
	$$ is a block-tridiagonal matrix,
	
	$$
	\begin{gathered}
	S = tridiag(-1, 4, -1)=
	\begin{pmatrix}
	4 & -1 & 0 & \cdots & 0 & 0 \\
	-1 & 4 & -1 & \cdots & 0 & 0 \\
	0 & -1 & 4 & \cdots & 0 & 0 \\
	\vdots & \vdots & \vdots & \ddots & \vdots & \vdots \\
	0 & 0 & \cdots & \cdots & 4 & -1 \\
	0 & 0 & \cdots & \cdots & -1 & 4 \\
	\end{pmatrix}\in \mathbb{R}^{m\times m}\\
	\end{gathered}
	$$ is a tridiagonal matrix, and 
	\begin{equation*}
	z^*=(1, 2, 1, 2, \dots, 1, 2, \dots)^{T}\in \mathbb{R}^{n}\\
	\end{equation*}
	is the unique solution of the LCP$(q, A)$, see \cite{Bai2010} for more details.
	
	In \autoref{tab:1}-\ref{tab:3}, the iteration steps, the CPU times, and the residual norms of GMSOR, GTMSOR, ATMSOR and RATMSOR methods for the symmetric case are partially listed.
	
	In \autoref{tab:1}, the numerical results for RATMSOR with $m=200, \mu=2, \alpha=1$ are given when choosing different diagonal parameter matrix $\Omega_3$ and relaxation parameter $\theta$. Note that,  RATMSOR reduces to ATMSOR when $\Omega_3=0$. It is not difficult to see that the fewer ‘IT’ and less ‘CPU’ are obtained when choosing appropriate $\Omega_3$ and $\theta$. Particularly, the optimal parameters are obtained with $\Omega_3=\frac{1}{2}D_A, \theta=1.7$.
	
	In \autoref{tab:2}, the comparison results of GMSOR, GTMSOR, ATMSOR and RATMSOR with $\alpha=1, \Omega_3=\frac{1}{2}D_A, \theta=1.7$ are listed. It can be seen from the table that ‘IT’ and ‘CPU’ of the four methods increase with the increasing of $m$ when $\mu$ is fixed. Inversely, it is not hard to see that the ‘IT’ and ‘CPU’ of the four methods decrease with the increasing of $\mu$ when $m$ is fixed. Note that, by comparing GMSOR and ATMSOR, it is effective that gives two splittings of $A\Omega_1$. The table also shows that RATMSOR can be superior to GMSOR, GTMSOR and ATMSOR in computing efficiency by choosing parameters properly.
	
	In \autoref{tab:3}, we give some comparison results of GMSOR, GTMSOR, and RATMSOR with $\Omega_3=\frac{1}{2}D_A, \mu=2, \theta=1.7$ when $\alpha\not= 1$. From the table, it is easy to find that both GMSOR and RATMSOR have less ‘IT’ and ‘CPU’ than GTMSOR with $\alpha<1$. Nevertheless, both RATMSOR and GTMSOR have less ‘IT’ and ‘CPU’ than GMSOR with $\alpha>1$. Besides, it can be seen that RATMSOR has better computing efficiency in either case. Hence, the convergence interval of RATMSOR with respect to $\alpha$ is wider than that of GMSOR and GTMSOR.
	
\end{exam}
\begin{exam}[\cite{Bai2010}]\label{example2}
	Let $m$ be a prescribed positive integer and $n = m^2$. Consider the LCP$(q, A)$, in which $A\in \mathbb{R}^{n\times n}$ is given by $A = \hat{A}+ \mu I$ and $q\in \mathbb{R}^n$ is given by $q = -Az^*$, where
	
	$$
	\begin{gathered}
	\hat{A} = tridiag(-1.5I, S, -0.5I)=
	\begin{pmatrix}
	S & -0.5I & 0 & \cdots & 0 & 0 \\
	-1.5I & S & -0.5I & \cdots & 0 & 0 \\
	0 & -1.5I & S & \cdots & 0 & 0 \\
	\vdots & \vdots & \vdots & \ddots & \vdots & \vdots \\
	0 & 0 & \cdots & \cdots & S & -0.5I \\
	0 & 0 & \cdots & \cdots & -1.5I & S \\
	\end{pmatrix}\in \mathbb{R}^{n\times n}\\
	\end{gathered}
	$$ is a block-tridiagonal matrix,
	
	$$
	\begin{gathered}
	S = tridiag(-1.5, 4, -0.5)=
	\begin{pmatrix}
	4 & -0.5 & 0 & \cdots & 0 & 0 \\
	-1.5 & 4 & -0.5 & \cdots & 0 & 0 \\
	0 & -1.5 & 4 & \cdots & 0 & 0 \\
	\vdots & \vdots & \vdots & \ddots & \vdots & \vdots \\
	0 & 0 & \cdots & \cdots & 4 & -0.5 \\
	0 & 0 & \cdots & \cdots & -1.5 & 4 \\
	\end{pmatrix}\in \mathbb{R}^{m\times m}\\
	\end{gathered}
	$$ is a tridiagonal matrix, and 
	\begin{equation*}
	z^*=(1, 2, 1, 2, \dots, 1, 2, \dots)^{T}\in \mathbb{R}^{n}\\
	\end{equation*}
	is the unique solution of the LCP$(q, A)$.
	
	In \autoref{tab:4}-\ref{tab:6}, the iteration steps, the CPU times, and the residual norms of GMSOR, GTMSOR and RATMSOR methods for the nonsymmetric case are partially listed.
	
	In \autoref{tab:4}, the numerical results for RATMSOR with $m=200, \mu=2, \alpha=1$ are given when choosing different diagonal parameter matrix $\Omega_3$ and relaxation parameter $\theta$. It can be seen form the table that the fewer ‘IT’ and less ‘CPU’ are obtained when choosing $\Omega_3$ and $\theta$ properly. Specially, the optimal parameters are $\Omega_3=\frac{1}{2}D_A, \theta=1.7$.

	In \autoref{tab:5}, the comparison results of GMSOR, GTMSOR and RATMSOR with $\alpha=1, \Omega_3=\frac{1}{2}D_A, \theta=1.9$ are listed. It can be found that ‘IT’ and ‘CPU’ of the four methods increase with the increasing of $m$ when $\mu$ is fixed. Inversely, the ‘IT’ and ‘CPU’ of the four methods decrease with the increasing of $\mu$ when $m$ is fixed. In addition, from the table, it is shows that RATMSOR can be superior to GMSOR and GTMSOR in computing efficiency by choosing appropriate parameters.

	In \autoref{tab:6}, we list some comparison results of GMSOR, GTMSOR, and RATMSOR with $\Omega_3=\frac{1}{2}D_A, \mu=2, \theta=1.7$ when $\alpha\not= 1$. it is not hard to find that both GMSOR and RATMSOR have less ‘IT’ and ‘CPU’ than GTMSOR with $\alpha<1$. However, both RATMSOR and GTMSOR have less ‘IT’ and ‘CPU’ than GMSOR with $\alpha>1$. Moreover, in the table, we can find that RATMSOR has better computing efficiency in either case. Hence, the convergence interval of RATMSOR with respect to $\alpha$ is wider than that of GMSOR and GTMSOR.
\end{exam}
\section{Conclusions}
\label{sec:6}
In this paper, a relaxation accelerated two-sweep matrix splitting iteration method for solving LCP$(q, A)$ is established by utilizing the matrix splitting and introducing diagonal parameter matrix and relaxation parameter. Then, We give the convergence analysis of the RATMMS iteration method where the system matrix is an $H_+$-matrix. Numerical experiments have illustrated that the proposed method is more efficient than the existing numerous modulus-based methods by selecting appropriate parameters, moreover, choosing different parameters have significant influence on the computational efficiency of the proposed method. Hence, the choice of parameters is crucial. However, there are few theoretical researches on parameter selection. Therefore, the theoretical research on the selection of optimal parameters will be a worthy subject.
%



\bibliographystyle{elsarticle-num}
\bibliography{cas-refs}


\end{document}